\newtheorem{theorem}{Theorem}[section]
\newtheorem*{theorem*}{Theorem}
\newtheorem{conjecture}[theorem]{Conjecture}
\newtheorem{lemma}[theorem]{Lemma}
\theoremstyle{remark}
\newtheorem{example}[theorem]{Example}
\providecommand{\Z}{}
\providecommand{\N}{}
\renewcommand{\Z}{\mathbb{Z}}
\renewcommand{\N}{{\mathbb N}}
\newcommand\cD{\mathcal D}
\newcommand\cG{\mathcal G}
\newcommand\cO{\mathcal O}
\newcommand\cP{\mathcal P}
\newcommand\cT{{\mathcal T}}
\newcommand\cV{\mathcal V}
\newcommand{\ld}[2]{\ensuremath{\underline{\delta}\left(#1,#2\right)}}
\newcommand{\ud}[2]{\ensuremath{\overline{\delta}\left(#1,#2\right)}}
\newcommand{\ith}[1]{#1^{\text{th}}}
\title{Achievable Burning Densities of Growing Grids}
\author{
Jordan Barrett\\
Dalhousie University\\
\texttt{jbarrett@dal.ca}
\and 
Karen Gunderson\\
University of Manitoba\\
\texttt{karen.gunderson@umanitoba.ca}
\and
JD Nir\\
Oakland University \\
\texttt{jdnir@oakland.edu}
\and
Pawe\l{} Pra\l{}at\\
Toronto Metropolitan University\\
\texttt{pralat@torontomu.ca}
}
\begin{document}
\maketitle

\begin{abstract}
Graph burning is a discrete-time process on graphs where vertices are sequentially activated and burning vertices cause their neighbours to burn over time. In this work, we focus on a dynamic setting in which the graph grows over time, and at each step we burn vertices in the growing grid $G_n = [-f(n),f(n)]^2$. We investigate the set of achievable burning densities for functions of the form $f(n)=\lceil cn^\alpha\rceil$, where $\alpha \ge 1$ and $c>0$. We show that for $\alpha=1$, the set of achievable densities is $[1/(2c^2),1]$, for $1<\alpha<3/2$, every density in $[0,1]$ is achievable, and for $\alpha=3/2$, the set of achievable densities is $[0,(1+\sqrt{6}c)^{-2}]$.
\end{abstract}

\section{Introduction}\label{sec:intro}

Graph burning is a discrete-time process that models the spread of influence in a network. Vertices are in one of two states: either \emph{burning} or \emph{unburned}. In each round, a burning vertex causes all of its neighbours to become burning and a new \emph{fire source} is chosen: a~vertex whose state is changed to burning regardless of its neighbours and of its previous state. The updates repeat until all vertices are burning. The \emph{burning number} of a graph $G$, denoted $b(G)$, is the minimum number of rounds required to burn all of the vertices of $G$.  

Graph burning first appeared in print in a paper of Alon~\cite{nA92}, motivated by a question of Brandenburg and Scott at Intel, and was formulated as a transmission problem involving a set of processors. It was then independently studied by Bonato, Janssen, and Roshanbin~\cite{BJR14,BJR16,eR16} who, in addition to introducing the name \emph{graph burning}, gave bounds and characterized the burning number for various graph classes. The problem has since received wide attention (e.g.~\cite{BBBCKPR21,BBJRR18,DEP24,LL16,MPR17,MPR18,NT22}), with particular focus given to the so-called Burning Number Conjecture that every connected graph on $n$ vertices requires at most $\lceil \sqrt{n} \rceil$ rounds to burn.

The concept of burning density, first introduced by Bonato, Gunderson, and Shaw~\cite{BGS20}, replaces a static network with a growing sequence of graphs. In this paradigm, there may not be a step in which every vertex is burning. Instead, we consider the proportion of burning vertices to the total number of vertices.

Let $\cG = (G_n, n \geq 1)$ be a sequence of connected graphs with the property that $G_n$ is an induced subgraph of $G_{n+1}$ for all $n \geq 1$. Next, let $\cV = (v_n, n \geq 1)$ be a sequence of vertices such that $v_n \in V(G_n) \cup \{\emptyset\}$ for all $n \geq 1$. We consider the following \emph{burning process on $\cG$} with \emph{activator sequence $\cV$}. Beginning with the null graph $G_0$ and the empty set $B_0$ of burning vertices, for $n \geq 1$, 
\begin{enumerate}
\item add vertices and edges to $G_{n-1}$ to construct $G_n$,
\item set $B_n$ to be the union of $B_{n-1}$ and each vertex in the neighbourhood of $B_{n-1}$ in $G_n$, and finally
\item if $v_n \neq \emptyset$, add $v_n$ to $B_n$.
\end{enumerate}
We think of this as a process in time where, at each time interval, the graph grows, the fire spreads, and a new vertex is burned; we often refer to the process at time $t$ as \emph{turn $t$}. We do not insist on $v_t$ being unburned when it is activated, nor do we insist that $v_t \notin \{ v_1,\dots,v_{t-1}\}$. Likewise, we allow an activator sequence to ``pass'' on turn $t$ by setting $v_t = \emptyset$. The only requirement we insist on is that $v_t \neq \emptyset$ for \emph{at least one} $t$. See Figure~\ref{fig:stop go} for an example of a burning process.

For a graph sequence $\cG$ and activator sequence $\cV$, define the \emph{lower burning density} and the \emph{upper burning density} of $\cV$ in $\cG$, respectively, as 
\[
\ld{\cG}{\cV}
:= 
\liminf_{n \to \infty} \frac{|B_n|}{|V(G_n)|}
\qquad \text{ and } \qquad
\ud{\cG}{\cV}
:= 
\limsup_{n \to \infty} \frac{|B_n|}{|V(G_n)|} \,.
\]
Trivially, $0 \le \underline{\delta}(\cG,\cV) \le \overline{\delta}(\cG,\cV) \le 1$. If $\underline{\delta}(\cG,\cV) = \overline{\delta}(\cG,\cV)$ for a pair $(\cG,\cV)$, then we call this value the \emph{burning density} of $\cV$ in $\cG$ and denote it $\delta(\cG,\cV)$. Note that if an activator sequence $\cV = (v_n, n\ge 1)$ has $v_n = \emptyset$ for all $n$, then $\delta(\cG, \cV) = 0$.  To avoid this trivial case, we assume that activator sequences contain at least some vertices, and call such sequences non-empty.  Finally, write $P(\cG) \subseteq [0,1]$ for the set of obtainable burning densities in $\cG$, defined as 
\begin{align*}
P(\cG) &:= \left\{ \rho \in [0,1] : \text{there is a non-empty activator sequence } \cV \text{ such that } \delta(\cG,\cV) = \rho \right\}.
\end{align*}

\medskip

In this paper we consider sequences $\cG$ of grid graphs on the integer lattice. We write $G = [a,b] \times [c,d]$ when $G$ is the grid graph with vertex set $[a,b] \times [c,d]$ and with edges joining pairs of vertices at $L_1$-distance one from one another. We write $[a,b]^2$ as shorthand for $[a,b] \times [a,b]$. 

Let $f: \N \to \N$ be a strictly increasing function, set $G_n = [-f(n),f(n)]^2$, and define $\cG(f) = \big( G_n, n\geq 1 \big)$. Bonato, Gunderson, and Shaw proved:
\begin{theorem*}[\cite{BGS20}]
Let $f : \N \to \N$ be a non-decreasing function.
\begin{enumerate}[(a)]
\item If $f(n) = \lceil cn \rceil$ for some $c \geq 1$, then $P(\cG(f)) = [1/(2c^2),1]$.
\item If $f(n) = \lceil cn^{3/2} \rceil$ for some $c > 0$, then there exists an activator sequence $\cV$ such that $\underline{\delta}(\cG(f),\cV) > 0$.
\item If $f(n) = \omega(n^{3/2})$, then $P(\cG(f)) = \{0\}$.
\end{enumerate}
\end{theorem*}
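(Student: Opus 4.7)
The plan is to handle the three parts in order (c), (a), (b), of increasing technical difficulty.

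\textbf{Part (c).} This follows from a direct volume count. At any time $n$ there have been at most $n$ activations, and an activation at time $t$ contributes a burning $L_1$-ball of radius at most $n-t$, which contains $2(n-t)^2 + O(n)$ vertices. Summing gives $|B_n| \le \sum_{t=1}^n 2(n-t)^2 + O(n^2) = \tfrac{2}{3} n^3 + O(n^2)$. Since $f(n) = \omega(n^{3/2})$, we have $|V(G_n)| = (2f(n)+1)^2 = \omega(n^3)$, so $|B_n|/|V(G_n)| \to 0$ for every $\cV$. The value $0$ is attained by any single-activation sequence, hence $P(\cG(f)) = \{0\}$.

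\textbf{Part (a).} Here $|V(G_n)| = 4c^2 n^2 + O(n)$. The key observation is that for $c \ge 1$, the $L_1$-ball of radius $n-t$ around any $v_t \in G_t = [-ct,ct]^2$ fits inside $G_n = [-cn,cn]^2$, because the distance from $v_t$ to the boundary of $G_n$ is at least $cn - ct \ge n-t$. Thus any non-empty $\cV$ with first activation at time $t_0$ satisfies $|B_n| \ge 2(n-t_0)^2 + O(n)$, so $\liminf |B_n|/|V(G_n)| \ge 1/(2c^2)$. The extreme $\rho = 1/(2c^2)$ is attained by a single activation at the origin. The extreme $\rho = 1$ is attained by a schedule that cycles, one per turn, through the four corners of $G_t$, activating at $(\pm\lceil ct \rceil, \pm\lceil ct \rceil)$ in rotation; a direct calculation shows that every vertex of $G_n$ outside a thin near-corner strip of area $O(n)$ is burning at time $n$. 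Intermediate $\rho \in (1/(2c^2), 1)$ are obtained by interpolation: combine a baseline single-origin activation with a tunable fraction of corner activations, and adjust this fraction to hit any target $\rho$.

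\textbf{Part (b).} This is the technical heart. Here $|V(G_n)| = 4c^2 n^3 + O(n^2)$, and the ball-volume bound gives $|B_n| \le \tfrac{2}{3} n^3 + O(n^2)$; the goal is to exhibit $\cV$ realizing $|B_n| = \Omega(n^3)$. The plan is to activate every turn and choose the $v_t$'s so that a positive fraction of the total ball volume is non-overlapping in $G_n$. A Poisson heuristic—if each $v_t$ were chosen uniformly at random in $G_t$, the expected uncovered fraction of $G_n$ would be approximately $e^{-1/(6c^2)}$—predicts a positive asymptotic density. The rigorous version places $v_t$ on a deterministic quasi-uniform pattern inside $G_t$ whose spatial scale tracks the $t^{3/2}$-growth of the grid, and checks that the associated balls realize the heuristic density up to lower-order corrections.

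The main obstacle is reconciling two constraints: $v_t \in G_t$ forces early activations into the small inner region, while positive-density coverage of $G_n$ requires activations spread throughout the much larger $G_n$. The exponent $3/2$ is exactly the borderline where these constraints become compatible, so that $\Theta(n)$ activations of ball-radius $\Theta(n)$ can be packed with bounded overlap in $G_n$. Verifying this compatibility explicitly—and turning the heuristic prediction into a rigorous construction—is the technical content of the proof.
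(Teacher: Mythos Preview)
First, note that this theorem is quoted from \cite{BGS20}; the present paper does not give its own self-contained proof of the full statement. It does, however, reprove part~(c) as an immediate consequence of Lemma~\ref{lem:burnt bound}, and it proves a strict sharpening of part~(b) in Lemma~\ref{lem:3/2main}(a). Part~(a) is simply cited. So there is no ``paper's own proof'' of part~(a) to compare against.

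\textbf{Part (c).} Your argument is correct and is exactly what the paper does: the bound $|B_n|\le \tfrac{2}{3}n^3+O(n^2)$ is Lemma~\ref{lem:burnt bound}(b), and the conclusion follows.

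\textbf{Part (a).} Your lower-bound observation (a single ball already forces density at least $1/(2c^2)$) is fine, and the corner-cycling idea for $\rho=1$ is reasonable though not carried out. The real gap is the interpolation step: ``combine a baseline single-origin activation with a tunable fraction of corner activations, and adjust this fraction'' does not establish that the resulting sequence has a \emph{density} (as opposed to distinct $\liminf$ and $\limsup$), nor that the map from the tuning parameter to the density is surjective onto $[1/(2c^2),1]$. Since the paper does not prove part~(a) either, there is nothing to compare, but as written this is not a proof.

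\textbf{Part (b).} Here you explicitly concede that ``turning the heuristic prediction into a rigorous construction is the technical content of the proof'' and then do not supply that construction, so this is a plan rather than a proof. Worse, the Poisson heuristic is quantitatively wrong: computing the uncovered fraction as $e^{-1/(6c^2)}$ treats the ball centres as uniform in $G_n$, whereas the constraint $v_t\in G_t$ forces the large-radius balls to sit near the origin---exactly the obstacle you name but do not resolve. For $c=1$ your heuristic predicts density about $0.154$, which already exceeds the true maximum $(1+\sqrt{6})^{-2}\approx 0.084$ established in Lemma~\ref{lem:3/2main}(b). The paper's construction (Section~\ref{sec:lowerbound}) is concrete and rather different from random placement: it works in phases $[t_i,t_{i+1})$ with $t_i=i^4$, and during phase $i$ it places activators on an explicit diamond lattice inside the four rectangles forming the annulus $G_{t_i}\setminus G_{t_{i-1}}$, with spacing chosen so that the balls are disjoint for $\Theta(t_i)$ turns and then cover the annulus. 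This yields not merely $\underline{\delta}>0$ but the optimal value $(1+\sqrt{6}c)^{-2}$.
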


Our primary contribution is an extension of their result in which we determine the set of achievable burning densities in $\cG(f)$ for all functions of the form $\lceil cn^\alpha \rceil$, where $\alpha \geq 1$ and $c$ is a positive real number. Note that if $\alpha = 1$ then we insist on $c \geq 1$, as otherwise the grid grows slower than the fire spreads and thus $P(\cG(f)) = \{1\}$ trivially. 
\begin{theorem}\label{thm:main}
Fix positive real numbers $c$ and $\alpha$ and set $f(n) = \lceil cn^\alpha \rceil$.
\begin{enumerate}[(a)]
\item If $\alpha = 1$ and $c \geq 1$, then $P(\cG(f)) = [1/(2c^2),1]$. \label{case:linear}
\item If $\alpha \in (1,3/2)$, then $P(\cG(f)) = [0,1]$. \label{case:wait_and_see}
\item If $\alpha = 3/2$, then $P(\cG(f)) = [0,(1+\sqrt{6} c)^{-2}]$. \label{case:cubic}
\item If $\alpha > 3/2$, then $P(\cG(f)) = \{0\}$. \label{case:big}
\end{enumerate}
\end{theorem}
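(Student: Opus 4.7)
Cases~(\ref{case:linear}) and~(\ref{case:big}) follow immediately from the Bonato--Gunderson--Shaw theorem recalled above: case~(\ref{case:linear}) is a verbatim restatement of its part~(a), while case~(\ref{case:big}) is a consequence of part~(c) since $\lceil cn^\alpha\rceil = \omega(n^{3/2})$ whenever $\alpha>3/2$. The substantive work lies in cases~(\ref{case:wait_and_see}) and~(\ref{case:cubic}), which I would handle in that order.

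For case~(\ref{case:wait_and_see}), $\alpha\in(1,3/2)$, density $0$ is achieved by a single activator followed by passes. For $\rho\in(0,1]$, my plan is to target a centred sub-square of $G_n$ of side $\sqrt{\rho}\cdot 2f(n)$ and to tile it with $L_1$-balls coming from the activator sequence. The budgetary fact enabling this is that, with one activator per turn, the total ball area available by time $n$ is $\sum_{t=1}^{n}2(n-t)^2\sim\frac{2n^3}{3}$, while the target area is only $\rho\cdot|G_n|=\Theta(n^{2\alpha})$, which is of strictly smaller order because $\alpha<3/2$. Concretely, I would partition the target into sub-boxes of side $\asymp n^{\alpha-1/2}$ (which is $o(n)$) and schedule an activator at the centre of each sub-box as soon as that box first lies inside $G_t$; the inequality $2\alpha-2<1$ ensures that the number of sub-boxes does not exceed $n$, so every scheduled activation fits inside the available $n$ time slots.

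For the achievability portion of case~(\ref{case:cubic}), $\alpha=3/2$, I would first build a single strategy attaining $\rho_c:=(1+\sqrt{6}c)^{-2}$; every smaller $\rho\in[0,\rho_c]$ then follows by applying the same construction to a rescaled target of half-side $\sqrt{\rho/\rho_c}\cdot D_n$, where $D_n:=cn^{3/2}/(1+\sqrt{6}c)$ is the half-side of the full target. The plan is to place the activator at turn $t$ so that its ball (of radius $n-t$) tiles a fresh piece of $[-D_n,D_n]^2$ not yet covered by earlier balls. The area count $\sum_{r=0}^{n-1}(2r^2+O(r))\sim\frac{2n^3}{3}$ shows that the burning budget exceeds $4D_n^2$ (with equality only in the limit $c\to\infty$); the surplus is exactly what is needed to absorb the placement constraint $p_t\in G_t$, which forces the outermost rings of $[-D_n,D_n]^2$ to be covered by late, small-radius balls whose centres must sit close to the boundary of the currently existing grid.

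The main obstacle, and the crux of the paper, is the matching upper bound in case~(\ref{case:cubic}). The naive union bound $|B_n|\leq\sum_{t=1}^{n}(2(n-t)^2+2(n-t)+1)=(2n^3+n)/3$ against $|G_n|\sim 4c^2n^3$ yields only the weaker bound $1/(6c^2)$, which strictly exceeds $(1+\sqrt{6}c)^{-2}$. To sharpen it I would argue radially: for each distance $d$ (in $L_\infty$) from the origin, a point at distance $d$ can be burnt only by a source of radius $r$ satisfying $g(r):=r+c(n-r)^{3/2}\geq d$. Coupling this reachability constraint with the total ball-area budget and the one-source-per-turn schedule reduces the problem to a one-dimensional extremal question over the scheduling $t\mapsto r$; its optimum should pin $|B_n|$ at or below $4D_n^2(1+o(1))$ with $D_n=cn^{3/2}/(1+\sqrt{6}c)$. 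Carrying through this extremal argument with the correct constant, rather than just up to the right order of magnitude, is the most delicate step of the theorem.
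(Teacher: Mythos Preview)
Your handling of (a), (d), and (b) is fine; for (b) the paper proceeds slightly differently---first achieve density $1$ using the Mitsche--Pra\l at--Roshanbin burning-number estimate for static grids, then rescale via a general lemma---but your direct tiling is in the same spirit. The real divergence is in case~(c), and in both directions you are making the problem harder than it is. For the upper bound, no radial reachability analysis is needed. Write $t=(1+\varepsilon)n$ and split $B_t = B_t^{(\mathrm{old})}\cup B_t^{(\mathrm{new})}$ according to whether the source was activated by turn $n$ or after. Since $B_n\subseteq V(G_n)$ and the old fires spread by at most $t-n=\varepsilon n=o(n^{3/2})$ further steps, $|B_t^{(\mathrm{old})}|\le |V(G_n)|+O(n^{5/2})=4c^2n^3+o(n^3)$; meanwhile $|B_t^{(\mathrm{new})}|\le \tfrac{2}{3}(\varepsilon n)^3$ by your own naive bound applied to the last $\varepsilon n$ turns. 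Dividing by $|V(G_t)|=4c^2(1+\varepsilon)^3n^3$ and minimising over $\varepsilon$ gives $\varepsilon=\sqrt{6}c$ and the exact constant $(1+\sqrt{6}c)^{-2}$. What you called ``the most delicate step'' is a one-variable optimisation once this split is seen.

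For the lower bound your plan---confine the fire to a centred square of half-side $D_n=cn^{3/2}/(1+\sqrt{6}c)$---is not the paper's strategy, and it is not clear it can be made to work as stated. The optimal construction does not target a sub-square: it burns \emph{all} of $G_m$ by time $t=(1+\sqrt{6}c)m$, with the more recent activators still forming disjoint balls, so that the density is $\bigl(|V(G_m)|+\tfrac{2}{3}(\sqrt{6}cm)^3\bigr)/|V(G_t)|$, matching the upper-bound decomposition term for term. Concretely, the paper works in phases $[t_{i+1},t_{i+2})$ with $t_i=i^4$; during each phase it tiles the four rectangular strips forming $G_{t_{i+1}-1}\setminus G_{t_i}$ with a diamond pattern of activators. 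Note that the fully-burnt core in this strategy has half-side $ct^{3/2}/(1+\sqrt{6}c)^{3/2}$, strictly smaller than your $D_t$; the remaining area comes from scattered new balls sitting \emph{outside} that core. Your square $[-D_t,D_t]^2$ has exactly the right total area, but the constraint $v_s\in G_s$ forces the early, large-radius balls near the centre, and you have not shown that the late, small-radius balls suffice to cover the outer annulus of your larger target without waste.
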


Note that Bonato, Gunderson, and Shaw proved case (\ref{case:linear}) and proved the strongest possible extension of case (\ref{case:big}) of \cref{thm:main}, namely if $f(n) = \omega(n^{3/2})$, then $P(\cG(f)) = \{0\}$. Our contribution to Theorem~\ref{thm:main} is in proving cases (\ref{case:wait_and_see}) and (\ref{case:cubic}). In fact, we determine $P(\cG(f))$ for a broader class of functions. In the cases (\ref{case:linear}) and (\ref{case:cubic}) of \cref{thm:main} we actually prove a more general result applicable to functions with similar asymptotic behaviour.

\begin{restatable}{theorem}{asymptotic}\label{thm:asymptotic}
Fix positive real numbers $c$ and $\alpha$ and suppose $f : \N \to \N$ is strictly increasing and satisfies $\lim_{n \to \infty} n^{-\alpha} f(n) = c \,.$
\begin{enumerate}[(a)]
\item[(a)] If $\alpha = 1$ and $c \geq 1$, then $P(\cG(f)) = [1/(2c^2),1]$. \label{case:asym_linear}
\item[(c)] If $\alpha = 3/2$, then $P(\cG(f)) = [0,(1+\sqrt{6} c)^{-2}]$. \label{case:asym_cubic}
\end{enumerate}
\end{restatable}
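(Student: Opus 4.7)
The plan is to reduce \cref{thm:asymptotic} to the corresponding cases of \cref{thm:main} by observing that all quantities controlling the limiting burning density depend on $f(n)$ only through its leading-order behaviour. Fix $\epsilon > 0$; by hypothesis there exists $N = N(\epsilon)$ such that
\[
(c-\epsilon)n^\alpha \leq f(n) \leq (c+\epsilon)n^\alpha \qquad \text{for all } n \geq N,
\]
and in particular $|V(G_n)| = (2f(n)+1)^2 = (1+o(1))\,4c^2 n^{2\alpha}$. The burning dynamics itself is independent of $f$: fire spreads at rate one in the $L_1$ metric and each source $v_t$ contributes at most $2(n-t)^2 + 2(n-t) + 1$ burning vertices by time $n$, regardless of how the grid grows.

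For the inclusions $P(\cG(f)) \subseteq [1/(2c^2),1]$ in case (a) and $P(\cG(f)) \subseteq [0,(1+\sqrt{6}c)^{-2}]$ in case (c), I would replay the density estimates used in the proofs of \cref{thm:main}(\ref{case:linear}) and (\ref{case:cubic}), substituting the sandwich $(c-\epsilon)n^\alpha \leq f(n) \leq (c+\epsilon)n^\alpha$ into the denominator $|V(G_n)|$. Since the bounds $1/(2c^2)$ and $(1+\sqrt{6}c)^{-2}$ are continuous in $c$, letting $\epsilon \to 0$ recovers the claimed containment. For the reverse inclusions, I would adapt the explicit activator sequences constructed in \cref{thm:main}: these interleave aggressive burning phases with idle phases whose relative lengths are calibrated to the target density $\rho$. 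Since this calibration involves $f$ only through its leading order, replacing $\lceil cn^\alpha \rceil$ by $f(n) = cn^\alpha(1+o(1))$ perturbs the running ratio $|B_n|/|V(G_n)|$ by an additive $o(1)$ and leaves the limit intact; one may further need to translate each activator $v_t$ by $o(t^\alpha)$ in order to guarantee $v_t \in G_t$ under the new $f$.

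The main obstacle is the achievability direction in case (c): the packing of diamond-shaped burn regions that approaches density $(1+\sqrt{6}c)^{-2}$ is essentially tight, and one must verify that the $o(n^{3/2})$ slack in $f$ does not accumulate to shift the limit. The cleanest implementation is to prove a robustness lemma stating that if two strictly increasing functions $f, g : \N \to \N$ satisfy $f(n) - g(n) = o(n^\alpha)$, then $P(\cG(f)) = P(\cG(g))$; applied to $g(n) = \lceil cn^\alpha \rceil$, this reduces \cref{thm:asymptotic} to \cref{thm:main}. The lemma itself follows by transporting any activator sequence from one graph sequence to the other at the cost of translating each $v_t$ by at most $|f(t) - g(t)| = o(t^\alpha)$, together with the fact that the symmetric difference of the two grids at time $n$ has size $o(n^{2\alpha})$, which is negligible against the $\Theta(n^{2\alpha})$ denominator.
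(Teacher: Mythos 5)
Your overall route is the same as the paper's: both containments are obtained by transferring the problem to $g(n)=\lceil cn^\alpha\rceil$ via a robustness lemma for pairs of growth functions whose ratio tends to $1$ (this is the paper's Lemma~\ref{lem:f,g same densities}), and the forward containment $P(\cG(f))\subseteq[1/(2c^2),1]$, resp.\ $[0,(1+\sqrt6 c)^{-2}]$, is indeed just the sandwich substitution you describe. The gap is in your proof of the robustness lemma, which is exactly where the real work lies. You propose to transport an optimal activator sequence from $\cG(g)$ to $\cG(f)$ by translating each $v_t$ spatially by up to $|f(t)-g(t)|=o(t^\alpha)$. But the only perturbation tool available (Lemma~\ref{lem:position-shifted}) requires a uniformly bounded displacement $d$, and unbounded displacements can genuinely change the density. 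Concretely, take $\alpha=3/2$ and $f(n)=\lceil n^{3/2}\rceil-\lceil n^{1.4}\rceil$: in the optimal construction for $g(n)=\lceil n^{3/2}\rceil$, the activators of phase $i$ live in boundary rectangles of width $\ell_i=\Theta(t_i^{5/4})$, which is $o(t_i^{1.4})$, so for much of the phase the entire rectangle lies outside $[-f(t),f(t)]^2$ and projecting its activators into the smaller grid collapses essentially all of them onto the boundary; their balls then overlap heavily and the ``new ball'' contribution, a constant fraction of the target density, is lost. The symmetric-difference estimate you invoke controls only the change in the denominator $|V(G_n)|$, not this loss in $|B_n|$; and the naive bound on how much a union of balls can change when each centre moves by $o(t^\alpha)$ does not close either.

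The correct repair, and what the paper actually does in Lemma~\ref{lem:f,g same densities}, is temporal rather than spatial: keep each activator at its original location but delay it until the first turn at which it lies inside the new grid, show that the required delay $\epsilon(n)$ is $o(n)$, and then show that an $o(n)$ re-timing changes the density ratio only by $o(1)$. Both of these steps rely on the controlled growth requirement (conditions (ii) and (iii)), which your lemma statement omits entirely. Your lemma as stated, for arbitrary strictly increasing $f,g$ with $f-g=o(n^\alpha)$, is only salvageable because the particular $g=\lceil cn^\alpha\rceil$ arising in the application has controlled growth; that hypothesis must appear in the lemma and be used in its proof. So the reduction is the right idea, but the transfer step needs the delay argument together with controlled growth, not a pointwise spatial perturbation.
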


In the remaining case of \cref{thm:main}, case (\ref{case:wait_and_see}), we prove a similar extension, though we require an additional assumption. We say a function $f$ satisfies the \emph{controlled growth requirement} or \emph{has controlled growth} if 
\begin{enumerate}
\item[i)] $f(n+1)>f(n)$ for all sufficiently large $n$, 
\item[ii)] for every function $\epsilon: \N \to \N$ satisfying $\epsilon(n) = o(n)$ we have $f(n+\epsilon(n)) = (1+o(1))f(n)$, and
\item[iii)] for every constant $c > 0$, there is a constant $d > 0$ such that for sufficiently large $n$, $f(n+cn) \geq (1+d)f(n)$.
\end{enumerate}

\begin{restatable}{theorem}{bonus}\label{thm:bonus}
Let $f : \N \to \N$ satisfy the controlled growth requirement and suppose $f(n) = \omega(n)$ and $f(n) = o(n^{3/2})$. Then $P(\cG(f)) = [0,1]$. 
\end{restatable}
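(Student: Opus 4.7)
The plan is to use a threshold (feedback) strategy $\cV_\rho$ parametrized by the target density $\rho \in [0,1]$, combined with two boundary constructions. For $\rho = 0$, a single activation suffices: $|B_n| = O(n^2)$ while $f(n) = \omega(n)$ forces $|V(G_n)| = \omega(n^2)$, so $|B_n|/|V(G_n)| \to 0$. For $\rho = 1$, I would exhibit an activator sequence that burns all but an $o(1)$ fraction of the grid, exploiting the surplus $|V(G_n)| = o(n^3)$ against the $\Theta(n^3)$ total ball-area that an always-activate schedule makes available. The natural construction lays out successive activators on a lattice whose spacing $s(n)$ shrinks relative to $f(n)$, using controlled growth to ensure that each source fits into $G_t$ at the time $t$ of its placement and has enough time to fill its assigned cell; qualitatively this is the $\alpha = 3/2$ extremal strategy from case~(c) of \cref{thm:main} applied in the sub-critical regime.

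For intermediate $\rho \in (0,1)$, define $\cV_\rho$ by: at each step $n$, activate an unburned vertex in $G_n$ that maximizes the $L_1$-distance to the current burning set if $|B_{n-1}|/|V(G_n)| < \rho$, and pass otherwise. I claim $\delta(\cG(f), \cV_\rho) = \rho$. The argument rests on three ingredients. First, the per-step change of $D_n := |B_n|/|V(G_n)|$ is $o(1)$: controlled growth~(ii) applied with $\epsilon(n) \equiv 1$ gives $|V(G_n)| - |V(G_{n-1})| = o(|V(G_n)|)$, and the total perimeter of the burning set at time $n$ is $O(n^2) = o(|V(G_n)|)$ since $f(n) = \omega(n)$. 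Second, if $\cV_\rho$ activates only finitely often then the $\rho = 0$ argument forces $D_n \to 0 < \rho$, and the threshold rule would then trigger an activation, a contradiction. Third, if $\cV_\rho$ passes only finitely often then the $\rho = 1$ construction forces $D_n \to 1 > \rho$, and the rule would then pass, a contradiction. These together force the threshold $|B_{n-1}|/|V(G_n)| = \rho$ to be crossed infinitely often, and the $o(1)$ step bound then pins $D_n \to \rho$.

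The principal obstacle is the $\rho = 1$ construction. One must carefully design a geometric placement scheme that simultaneously respects three constraints: the source at time $t$ must lie in $G_t$, not in the (much larger) eventual $G_n$; the union of $L_1$-balls must cover all but an $o(1)$ fraction of the current grid as $n \to \infty$; and the construction must work with $f$ satisfying only the qualitative bound $f(n) = o(n^{3/2})$ rather than a clean power law. Both clauses (ii) and (iii) of controlled growth are needed here: (ii) controls per-step fluctuations of $|V(G_n)|$ (and hence both the threshold-rule analysis and the coverage accounting), while (iii) ensures that in any window of length $\Theta(n)$ the grid grows by a bounded-below factor, so that lattice points chosen early in a phase remain well inside $G_n$ at the phase's end and an epoch-style deployment becomes feasible.
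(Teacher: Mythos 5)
Your $\rho=0$ argument matches the paper's, and your $\rho=1$ sketch is in the right spirit, but the two places you lean hardest are exactly where the gaps are. For $\rho=1$, the paper does not build a lattice placement from scratch: it invokes \cref{thm:MPR} (Mitsche--Pra\l{}at--Roshanbin), which says a grid with side $f(t)$ can be burned in $\Theta(f(t)^{2/3})$ rounds; since $f(t)=o(t^{3/2})$ this is $o(t)$, so one repeatedly re-burns the entire current grid $G_{t_i}$ in a phase so short that, by controlled growth~(ii), the grid only grows by a factor $1+o(1)$ during the phase. You correctly identify this construction as ``the principal obstacle'' but do not complete it; you should recognize that the needed quantitative fact is available off the shelf, and that the construction must proceed in repeated full re-coverings of the growing grid rather than a single placement with shrinking spacing.

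The more serious gap is the intermediate case. Your threshold strategy needs two things that are asserted but not proved. First, the ``passes only finitely often $\Rightarrow D_n\to 1$'' step silently replaces your carefully designed $\rho=1$ sequence with the greedy farthest-point rule; the effectiveness of greedy activation here is not established (the balls have unequal radii, so this is not the $k$-center problem), whereas \cref{thm:MPR} is stated for an explicit, non-greedy schedule. Second, and more fundamentally, ``crosses $\rho$ infinitely often'' plus ``$o(1)$ per-step change'' does \emph{not} pin $D_n\to\rho$: you must also bound the drift between consecutive crossings. On the high side, during a long pass-window the burning set still grows by fire spread at rate up to $\Theta(n^2)$ per step while $|V(G_n)|$ can be as small as $n^2\omega(1)$, so over windows of superconstant length the density can in principle climb well above $\rho$ before condition~(iii) of controlled growth forces it back down; nothing in your argument excludes this. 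The paper avoids the entire feedback analysis: for $\rho\in(0,1)$ it sets $g(n)=\lceil\sqrt{\rho}\,f(n)\rceil$ (monotonized to $g^+$), applies \cref{lem:(d) upper} to get density $1$ in $\cG(g^+)$, and then transfers via the scaling \cref{lem:f,g frac density} to get density $\rho$ in $\cG(f)$. If you want to keep a threshold strategy you must prove both drift bounds; otherwise the scaling route is the clean fix.
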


\noindent We also provide examples (see~\cref{ex:need smoothness}) demonstrating that \cref{thm:bonus} may not hold if $f$ does not satisfy the controlled growth requirement. 

\bigskip

The remainder of the paper is organized as follows. In Section~\ref{sec:prelim}, we present auxiliary observations that will simplify other proofs. In particular, in Subsection~\ref{sec:aux1} we will show that one may conveniently modify an activator sequence without affecting burning densities. Moreover, restricting to increasing functions $f(n)$ is also justified---see Subsection~\ref{sec:aux2}. This, in turn, shows that one can conveniently modify growth functions---see Subsection~\ref{sec:aux3}. We finish this section with a brief discussion that some form of restriction on the growth function is necessary. Section~\ref{sec:wait and see} is devoted to the proof of Theorem~\ref{thm:bonus} which implies \cref{thm:main} (\ref{case:wait_and_see}). The proof of Theorem~\ref{thm:asymptotic} which implies  Theorem~\ref{thm:main} (\ref{case:cubic}) can be found in Section~\ref{sec:cubic}. We conclude the paper with a few open problems---see Section~\ref{sec:conclusion}.

\section{Preliminary results}\label{sec:prelim}

We present a series of lemmas that, when combined, allow us to modify a burning process in quite substantial ways whilst maintaining the same lower and upper burning densities. Although not all of the results in this section are required to prove the three main theorems, we include them all as they offer a set of tools for analyzing burning processes.

For a burning process $(\cG,\cV)$, write $B_t[i]$ for the subset of $B_t$ that would still be burned if only the $\ith{i}$ vertex in $\cV$ was activated during the process. Note that 
\[
B_t = \bigcup_{i \in [t]} B_t[i] \,,
\]
and also note that $B_t[i] \cap B_t[j]$ is not necessarily empty. Finally, note that $B_t[i]$ is not necessarily the ball of radius $t-i$ centred at $v_i$ in $G_t$. For example (see Figure~\ref{fig:stop go}), if $f(1) = f(2) = 1$, $f(3) = 3$, and vertex $(1,1)$ is activated on turn 1, then $B_1[3]$ is not the ball of radius of 2 centred at $(1,1)$. However, if $f$ is strictly increasing, then $B_t[i]$ is in fact the ball of radius $t-i$ centred at $v_i$ in $G_t$. 

\begin{figure}
\[
\includegraphics[scale=0.45, trim={0cm 5cm 0cm 5cm}]{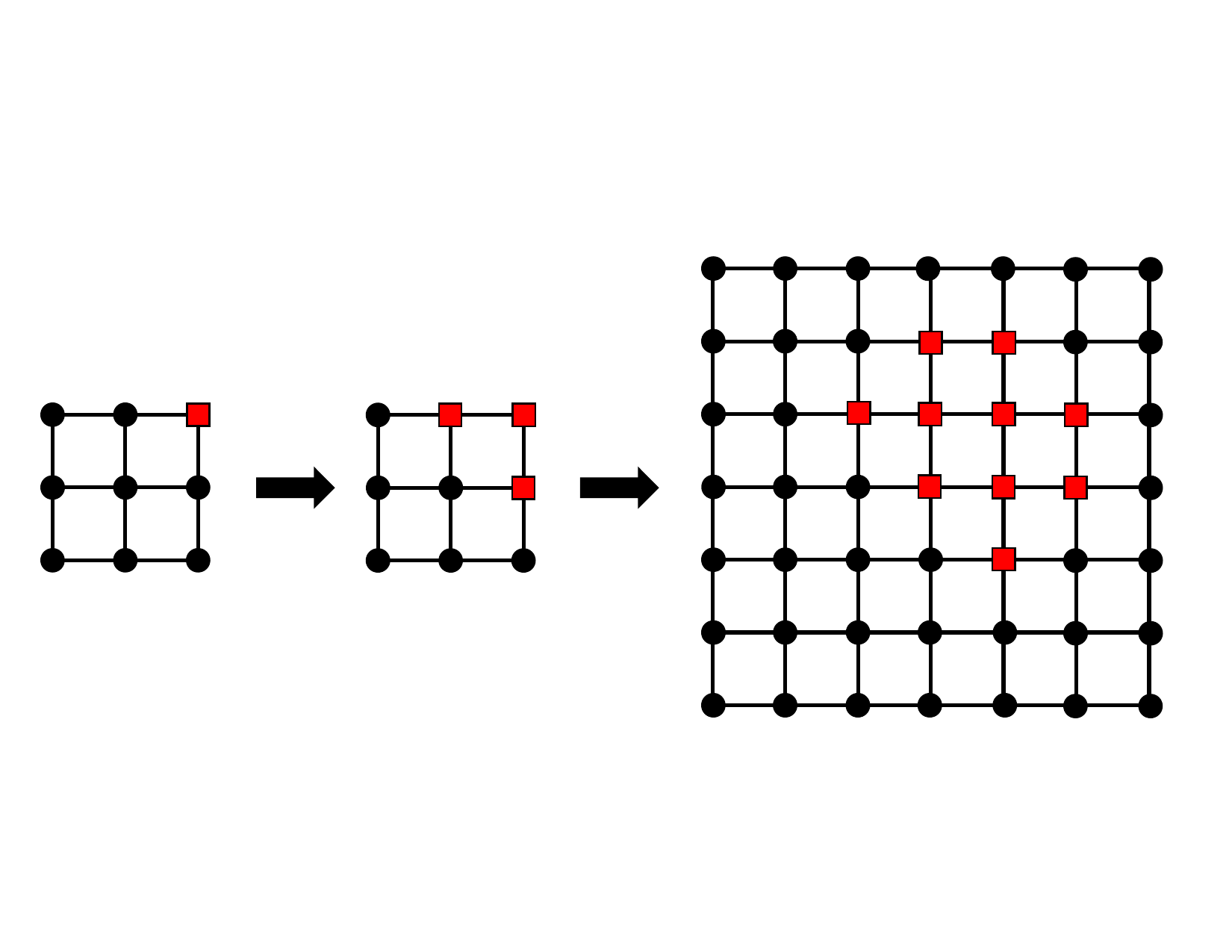}
\]
\caption{The first three turns of a burning process with $f(1)=f(2)=1$ and $f(3)=3$. Vertex $(1,1)$ is activated on turn 1 and no other vertices are activated on subsequent turns. The burned vertices are highlighted as red squares.}\label{fig:stop go}
\end{figure}

\subsection{Key lemmas part 1: manipulating the activator sequence}\label{sec:aux1}

In this first collection of three lemmas, we will show how an activator sequence can be delayed, trimmed at the front, and point-wise perturbed without altering the lower and upper burning densities. For these lemmas, we assume $f$ has controlled growth. 

\begin{lemma}\label{lem:time-shifted}
Fix $k \geq 1$, let $\cO = (v_n, n \geq 1)$ be an activator sequence on $\cG(f)$ and let $\cD = (u_n, n \geq 1)$ be a modified activator sequence with $u_1 = \dots = u_k = \emptyset$ and $u_{k+i} = v_{i}$ for all $i \geq 1$. Then $\ld{\cG(f)}{\cD} = \ld{\cG(f)}{\cO}$ and $\ud{\cG(f)}{\cD} = \ud{\cG(f)}{\cO}$.
\end{lemma}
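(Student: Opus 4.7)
The plan is to compare the burned set at time $m = n+k$ in the delayed process $\cD$ with the burned set at time $n$ in the original process $\cO$, and to show that they differ only by vertices in $V(G_{n+k}) \setminus V(G_n)$, which is negligible in the limit.

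Since the controlled growth requirement guarantees $f$ is strictly increasing for all sufficiently large $n$, the observation from the preamble applies: for such $n$, $B_n^{\cO}[i]$ is the closed $L_1$-ball of radius $n - i$ around $v_i$ in $G_n$, while $B_{n+k}^{\cD}[k+i]$ is the closed $L_1$-ball of radius $(n+k)-(k+i) = n-i$ around $u_{k+i} = v_i$ in $G_{n+k}$. The key geometric fact is that rectangular lattice grids are $L_1$-convex: the graph distance between any two vertices of $G_n$ equals their $L_1$-distance, which in turn equals their graph distance in $G_{n+k}$. Therefore the $L_1$-ball of radius $n-i$ in $G_{n+k}$, intersected with $V(G_n)$, equals the $L_1$-ball of radius $n-i$ in $G_n$. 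Taking unions over all activators and reindexing $m = n+k$, this gives
\[
B_m^{\cD} \cap V(G_{m-k}) = B_{m-k}^{\cO}
\]
for all sufficiently large $m$, and therefore $|B_{m-k}^{\cO}| \le |B_m^{\cD}| \le |B_{m-k}^{\cO}| + \bigl(|V(G_m)| - |V(G_{m-k})|\bigr)$.

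To finish, applying property (ii) of controlled growth to the constant function $\epsilon(n) \equiv k$ (which is trivially $o(n)$) yields $f(m) = (1+o(1)) f(m-k)$, so $|V(G_m)| - |V(G_{m-k})| = o(|V(G_m)|)$ and $|V(G_{m-k})|/|V(G_m)| \to 1$. Dividing the sandwich by $|V(G_m)|$ gives $|B_m^{\cD}|/|V(G_m)| = |B_{m-k}^{\cO}|/|V(G_{m-k})| + o(1)$, and taking liminfs and limsups as $m \to \infty$ (equivalently, over $n = m-k \to \infty$) recovers $\ld{\cG(f)}{\cD} = \ld{\cG(f)}{\cO}$ and $\ud{\cG(f)}{\cD} = \ud{\cG(f)}{\cO}$. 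The main obstacle is simply articulating the geometric convexity claim that yields the ball identity above; once that is in hand, the remainder is indexing and a routine application of property (ii).
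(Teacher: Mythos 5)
Your argument is correct when $f$ is strictly increasing from the outset, and in that regime it is arguably cleaner than needed: both $B^{\cO}_{m-k}[i]$ and $B^{\cD}_m[i+k]$ are then the full $L_1$-ball of radius $m-k-i$ about $v_i$, so in fact $B^{\cD}_m = B^{\cO}_{m-k}$ exactly, and the rest is the indexing and controlled-growth bookkeeping you describe. However, the lemma is stated for $f$ with controlled growth, which only guarantees $f(n+1)>f(n)$ for \emph{sufficiently large} $n$, and here your key geometric claim fails. Identifying $B^{\cO}_n[i]$ with the ball of radius $n-i$ requires $f$ to be strictly increasing from the activation time $i$ onward, not merely at large times: if $f$ stalls at some step after $v_i$ is activated, the fire front is clipped by the boundary and $B^{\cO}_n[i]$ remains a proper subset of the radius-$(n-i)$ ball for \emph{all} later $n$ (this is exactly the phenomenon in Figure~\ref{fig:stop go}, where $B_3[1]$ is missing three vertices of the radius-$2$ ball). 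Meanwhile the same activator in the delayed process, fired $k$ rounds later when the stalls may be over, can produce the full ball. Consequently the inclusion $B^{\cD}_m \cap V(G_{m-k}) \subseteq B^{\cO}_{m-k}$ --- the direction you need for the upper half of your sandwich, hence for $\overline{\delta}(\cG(f),\cD) \le \overline{\delta}(\cG(f),\cO)$ --- is not justified as written.

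The gap is repairable, but it needs an actual argument: either bound the discrepancy contributed by the boundedly many activators fired during stall steps and show it is $o(|V(G_m)|)$, or, more simply, replace that direction by a same-time comparison as the paper does. An induction on the process dynamics gives $B^{\cD}_t[i+k] \subseteq B^{\cO}_t[i]$ for every $t$ (the delayed copy of $v_i$ has burned for $k$ fewer rounds through the same graphs), hence $B^{\cD}_t \subseteq B^{\cO}_t$, and the inequality of densities follows with no shape claim and no use of controlled growth at all. Your other direction, $B^{\cO}_{m-k} \subseteq B^{\cD}_m$, is true in general, but the robust proof is again inductive ($B^{\cO}_t[i] \subseteq B^{\cD}_{t+k}[i+k]$ because $G_{t+1} \subseteq G_{t+k+1}$) rather than via the ball identity; combined with condition (ii) of controlled growth it yields the reverse inequalities exactly as in your final paragraph.
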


For ease of readability, we refer to $(\cG(f), \cO)$ and $(\cG(f), \cD)$, respectively, as the \emph{original} and \emph{delayed} burning processes.  

\begin{proof}
Write $B^{\cO}_t$ and $B^{\cD}_t$ for the respective sets of burning vertices in the original and delayed processes after time $t$. Now let $v_i \in \cO$ and note that $v_i = u_{i+k} \in \cD$. Thus, $v_i$ is activated $k$ rounds earlier in the original process than in the delayed process, meaning $v_i$ burns no more in the delayed process than in the original one, i.e., 
\[
B^{\cD}_t[i+k] \subseteq B^{\cO}_t[i] \,.
\]
Moreover, since $B^{\cD}_t[1] = \dots = B^{\cD}_t[k] = \emptyset$ we have that
\[
B^{\cD}_t
= 
\bigcup_{i \in [n]} B^{\cD}_t[i]
\subseteq 
\bigcup_{i \in [n]} B^{\cO}_t[i]
= 
B^{\cO}_t\,,
\]
establishing that $\ld{\cG(f)}{\cD} \leq \ld{\cG(f)}{\cO}$ and $\ud{\cG(f)}{\cD} \leq \ud{\cG(f)}{\cO}$.

For the other direction, we compare the burning sets $B^{\cO}_t[i]$ and $B^{\cD}_{t+k}[i+k]$. If $t=i$, then $B^{\cO}_i[i] = \{v_i\}$ and $B^{\cD}_{i+k}[i+k] = \{u_{i+k}\}$ and these sets are equivalent. Now assume that $B^{\cO}_t[i] \subseteq B^{\cD}_{t+k}[i+k]$ for some arbitrary $t \geq i$ and consider the two burning processes at their respective next steps. For the original process, $B^{\cO}_{t+1}[i]$ is constructed from $B^{\cO}_t[i]$ by adding all adjacent vertices in $G_{t+1}$, then adding $v_{t+1}$. Similarly for the delayed process, $B^{\cD}_{t+k+1}[i+k]$ is constructed from $B^{\cD}_{t+k}[i+k]$ by adding all adjacent vertices in $G_{t+k+1}$, then adding $u_{t+k+1} = v_{t+1}$. Thus, since $G_{t+1} \subseteq G_{t+k+1}$ and $B^{\cO}_t[i] \subseteq B^{\cD}_{t+k}[i+k]$, it follows that $B^{\cO}_{t+1}[i] \subseteq B^{\cD}_{t+k+1}[i+k]$. By induction, we have established that 
\[
B^{\cO}_t[i] \subseteq B^{\cD}_{t+k}[i+k]  \,,
\]
for all $t \geq i$. Therefore,
\[
\frac{|B^{\cD}_{t+k}|}{|V(G_t)|} \geq \frac{|B^{\cO}_{t}|}{|V(G_t)|} \,.
\]
Finally, as $f(n)$ has controlled growth, for $t \to \infty$ and $k$ fixed we have that $|V(G_{t+k})| = (1+o(1))|V(G_{t})|$, implying that 
\[
\liminf_{t \to \infty} \frac{|B^{\cD}_{t+k}|}{|V(G_{t+k})|}
=
\liminf_{t \to \infty} \frac{|B^{\cD}_{t+k}|}{|V(G_{t})|}
\geq 
\liminf_{t \to \infty} \frac{|B^{\cO}_{t}|}{|V(G_t)|} \,,
\]
and similarly for the limit superior. This inequality establishes that $\ld{\cG(f)}{\cD} \geq \ld{\cG(f)}{\cO}$ and $\ud{\cG(f)}{\cD} \geq \ud{\cG(f)}{\cO}$, finishing the proof.
\end{proof}

\begin{lemma}\label{lem:turn1}
Fix $k \geq 1$, let $\cO = (v_n, n \geq 1)$ be an activator sequence on $\cG(f)$ with $v_i \ne \emptyset$ for some $i>k$, and let $\cT = (u_n, n \geq 1)$ be the modified activator sequence with $u_1 = \dots = u_k = \emptyset$ and $u_i = v_i$ for all $i > k$. Then $\underline{\delta}(\cG(f),\cT) = \underline{\delta}(\cG(f),\cO)$ and $\overline{\delta}(\cG(f),\cT) = \overline{\delta}(\cG(f),\cO)$.
\end{lemma}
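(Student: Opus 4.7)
The plan is to show that replacing the first $k$ activators of $\cO$ with passes alters the burned set by only $o(|V(G_t)|)$ vertices, so both lower and upper densities are preserved. The easy inequality $\ld{\cG(f)}{\cT} \le \ld{\cG(f)}{\cO}$ (and similarly for $\overline{\delta}$) follows immediately from the inclusion $B^{\cT}_t \subseteq B^{\cO}_t$, which a straightforward induction on $t$ establishes: at each step the trimmed process spreads the same way but activates no new vertex at times $1,\dots,k$.

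For the reverse direction, I would fix an index $j^* > k$ with $v_{j^*} \neq \emptyset$, which exists by hypothesis. Since $j^* > k$, the vertex $v_{j^*}$ is activated at the same time in both processes, so $B^{\cT}_t \supseteq B^{\cT}_t[j^*] = B^{\cO}_t[j^*]$. Combining this with the decomposition $B^{\cO}_t = B^{\cT}_t \cup \bigcup_{i=1}^k B^{\cO}_t[i]$ gives
\[
B^{\cO}_t \setminus B^{\cT}_t \;\subseteq\; \bigcup_{i=1}^k \bigl( B^{\cO}_t[i] \setminus B^{\cO}_t[j^*] \bigr).
\]
For $t$ large enough that $f$ is strictly increasing on the relevant range, each $B^{\cO}_t[i]$ is the $L_1$-ball in $G_t$ of radius $t-i$ centred at $v_i$, and similarly for $j^*$.

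The main technical step is a geometric count for the symmetric difference of two nearly-concentric $L_1$-balls. For each $i\le k$, set $D_i = \|v_i - v_{j^*}\|_1$, a constant in $t$. By the triangle inequality, any $w$ in the ball of radius $t-i$ about $v_i$ satisfies $\|w - v_{j^*}\|_1 \le (t-i) + D_i$, so each $w \in B^{\cO}_t[i] \setminus B^{\cO}_t[j^*]$ must lie in a lattice annulus about $v_{j^*}$ with inner radius $t-j^*$ and outer radius $t - i + D_i$. This annulus has constant width $(j^*-i) + D_i$ and sits at distance $\Theta(t)$ from its centre, so it contains only $O(t)$ lattice points (the implicit constant depending on $k$, the $v_i$, and $v_{j^*}$ but not on $t$). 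Summing over $i$ yields $|B^{\cO}_t \setminus B^{\cT}_t| = O(t)$.

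To finish I would split into two regimes according to the size of $f(t)$. If $t - j^* \ge 2f(t) + \|v_{j^*}\|_1$, then the ball $B(v_{j^*}, t-j^*)$ already contains all of $G_t$, so $B^{\cO}_t = B^{\cT}_t = V(G_t)$ and the density difference is exactly zero. Otherwise $f(t) = \Omega(t)$, whence $|V(G_t)| = \Omega(t^2)$, and the $O(t)$ bound yields $|B^{\cO}_t \setminus B^{\cT}_t| / |V(G_t)| = O(1/t) \to 0$. In either regime the ratio tends to zero, which delivers $\ld{\cG(f)}{\cT} \ge \ld{\cG(f)}{\cO}$ and $\ud{\cG(f)}{\cT} \ge \ud{\cG(f)}{\cO}$. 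The trickiest part of the proof is setting up the annulus bound carefully; the case split at the end is routine, and the only place controlled growth enters is via the strict-monotonicity condition (i), which justifies treating each $B^{\cO}_t[i]$ as a genuine $L_1$-ball in $G_t$.
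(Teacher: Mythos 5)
Your argument is correct in substance but takes a genuinely different route from the paper. The paper never counts lattice points: it chooses a time $\ell>k$ by which the trimmed process has burned all of $V(G_k)$ (possible because some $u_i=v_i\ne\emptyset$ survives the trimming), observes that the delayed sequence $\cD$ obtained by shifting all of $\cO$ by $\ell$ then satisfies $B^{\cD}_t\subseteq B^{\cT}_t$ (the first $k$ delayed activators land on vertices of $G_k$ that are already burning in $\cT$), and concludes via Lemma~\ref{lem:time-shifted} that $\underline{\delta}(\cG(f),\cT)\ge\underline{\delta}(\cG(f),\cD)=\underline{\delta}(\cG(f),\cO)$. That argument is purely monotonicity-based but leans on the delay lemma, hence on condition~(ii) of controlled growth; yours is self-contained, uses only the eventual strict monotonicity of $f$, and yields the stronger quantitative facts $|B^{\cO}_t\setminus B^{\cT}_t|=O(t)$ and a density gap of $O(1/t)$, at the price of being tied to the planar grid geometry (the paper's sandwich would survive in settings where no annulus count is available). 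One imprecision you should repair: the claim that $B^{\cO}_t[i]$ is the exact $L_1$-ball of radius $t-i$ fails whenever $f$ stalls at some time after $i$ --- this is precisely the phenomenon in Figure~\ref{fig:stop go} --- and taking $t$ large does not undo clipping that occurred early, while controlled growth only guarantees $f(n+1)>f(n)$ for $n\ge n_0$. The fix is to use only the two containments you actually need: $B^{\cO}_t[i]$ is always contained in the ball of radius $t-i$ about $v_i$, and $B^{\cO}_t[j^*]$ contains the ball of radius $t-\max(j^*,n_0)$ about $v_{j^*}$ intersected with $V(G_t)$ (once $f$ increases strictly, $f(n_0+s)\ge f(n_0)+s$, so no further clipping occurs). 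This widens your annulus by the additive constant $\max(j^*,n_0)-j^*$ and changes nothing asymptotically; the same constant correction fixes the first branch of your concluding case split.
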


For ease of readability, we refer to $(\cG(f), \cT)$ as the \emph{trimmed} burning processes.  

\begin{proof}
Write $B^{\cO}_t$ and $B^{\cT}_t$ for the respective burning sets in the original and trimmed processes after time $t$. Note that $\underline{\delta}(\cG(f),\cT) \leq \underline{\delta}(\cG(f),\cO)$ and $\overline{\delta}(\cG(f),\cT) \leq \overline{\delta}(\cG(f),\cO)$ is immediate, as $B^{\cT}_t \subseteq B^{\cO}_t$ for all $t \geq 1$. 

For the other direction, let $\ell > k$ satisfy $V(G_k) \subseteq B^{\cT}_\ell$. Such an $\ell$ exists as there is some time $s \geq k$ when $B^{\cT}_s$ is non-empty, and after time $s+2f(s)+1$ all of $G_s$ (and hence all of $G_k$) is burning. Let $\cD$ be the activator sequence that activates $v_i$ on round $i+\ell$ for all $i \geq 1$ and activates nothing beforehand, i.e., $(\cG(f), \cD)$ is the delayed process in Lemma~\ref{lem:time-shifted} with $k$ replaced by $\ell$. Then, writing $B^{\cD}_t$ for the set of burning vertices in the delayed process after time $t$, we have that 
\begin{align*}
B^{\cD}_t[\ell + i] &\subseteq \bigcup_{j \in [\ell]} B^{\cT}_t[j] \hspace{1cm} \text{for } 1 \leq i \leq k \text{, and} \\
B^{\cD}_t[\ell + i] &\subseteq B^{\cT}_{t}[i] \hspace{1.76cm} \text{for } i > k \,,
\end{align*}
the first containment coming from the fact that $v_1,\dots,v_k \in V(G_k)$ are already burning in the trimmed process before time $\ell+1$, and the second containment due to the fact that $v_i$ is the $\ith{i}$ activator vertex in $\cT$ and the $\ith{(\ell+i)}$ activator vertex in $\cD$. Therefore, $B^{\cD}_t \subseteq B^{\cT}_t$ so $\underline{\delta}(\cG(f),\cT) \ge \underline{\delta}(\cG(f), \cD)$ and $\overline{\delta}(\cG(f),\cT) \ge \overline{\delta}(\cG(f), \cD)$. Furthermore, by Lemma~\ref{lem:time-shifted} we have that $\underline{\delta}(\cG(f),\cD) = \underline{\delta}(\cG(f),\cO)$ and $\overline{\delta}(\cG(f),\cD) = \overline{\delta}(\cG(f),\cO)$. In combination, we get that $\underline{\delta}(\cG(f),\cT) \geq \underline{\delta}(\cG(f),\cO)$ and $\overline{\delta}(\cG(f),\cT) \geq \overline{\delta}(\cG(f),\cO)$, finishing the proof.
\end{proof}

\begin{lemma}\label{lem:position-shifted}
Fix $d \geq 1$ and let $\cO = (v_n,n \geq 1)$ and $\cP = (u_n,n \geq 1)$ be activator sequences on $\cG(f)$ with the property that $\mathrm{dist}_{G_n}(v_n,u_n) \leq d$ for all $n \geq 1$. Then $\underline{\delta}(\cG(f),\cP) = \underline{\delta}(\cG(f),\cO)$ and $\overline{\delta}(\cG(f),\cP) = \overline{\delta}(\cG(f),\cO)$.
\end{lemma}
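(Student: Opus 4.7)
The plan is to prove the equality of burning densities by showing that a $d$-round delayed copy of $\cP$ is pointwise dominated by $\cO$, and then symmetrize. Let $\cD = (w_n, n \geq 1)$ be the activator sequence obtained by delaying $\cP$ by $d$ rounds: $w_1 = \cdots = w_d = \emptyset$ and $w_{d+i} = u_i$ for all $i \geq 1$. By \cref{lem:time-shifted}, $\underline{\delta}(\cG(f),\cD) = \underline{\delta}(\cG(f),\cP)$ and $\overline{\delta}(\cG(f),\cD) = \overline{\delta}(\cG(f),\cP)$, so it suffices to compare $\cD$ with $\cO$.

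The core claim I would establish is that $B^{\cD}_t \subseteq B^{\cO}_t$ for all $t \geq 1$, where $B^{\cD}_t, B^{\cO}_t$ are the burning sets in the respective processes. To prove this, fix $i \geq 1$ and show by induction on $t \geq d+i$ that
\[
B^{\cD}_t[d+i] \;\subseteq\; B^{\cO}_t[i].
\]
For the base case $t = d+i$, we have $B^{\cD}_{d+i}[d+i] = \{u_i\}$, so I need $u_i \in B^{\cO}_{d+i}[i]$. By hypothesis there is a path of length at most $d$ from $v_i$ to $u_i$ in $G_i$, and since $G_i \subseteq G_{i+1} \subseteq \cdots \subseteq G_{d+i}$, this path is available round by round: the fire that is lit at $v_i$ at time $i$ traverses one edge of the path per round, reaching $u_i$ by time $d+i$. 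For the inductive step, both $B^{\cD}_{t+1}[d+i]$ and $B^{\cO}_{t+1}[i]$ are obtained from the time-$t$ versions by taking the closed neighbourhood in $G_{t+1}$, so the containment is preserved. Combining the claim over all $i$ and using $B^{\cD}_t[j] = \emptyset$ for $j \leq d$, we conclude that $B^{\cD}_t = \bigcup_i B^{\cD}_t[d+i] \subseteq \bigcup_i B^{\cO}_t[i] = B^{\cO}_t$.

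This inclusion immediately gives $\underline{\delta}(\cG(f),\cD) \leq \underline{\delta}(\cG(f),\cO)$ and $\overline{\delta}(\cG(f),\cD) \leq \overline{\delta}(\cG(f),\cO)$, hence by \cref{lem:time-shifted} the same inequalities hold with $\cD$ replaced by $\cP$. The distance hypothesis $\mathrm{dist}_{G_n}(v_n,u_n) \leq d$ is symmetric in $\cO$ and $\cP$, so reversing the roles of the two sequences and repeating the argument yields the reverse inequalities, completing the proof.

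The main obstacle, and the only place where care is needed, is the base case of the induction: one must verify that the path realizing $\mathrm{dist}_{G_i}(v_i,u_i) \leq d$ can actually be traversed by the spreading fire in the $\cO$-process over the $d$ rounds $i+1, \ldots, d+i$, during which the host graph is simultaneously growing. This is exactly where the nestedness $G_n \subseteq G_{n+1}$ is used. Notably, the argument does not require the controlled growth assumption stated at the top of the subsection, since we do not need to compare $|V(G_t)|$ with $|V(G_{t+d})|$ — the inclusion $B^{\cD}_t \subseteq B^{\cO}_t$ is at the same time $t$ on both sides.
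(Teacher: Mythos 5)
Your proof is correct and takes essentially the same route as the paper: both delay one of the two sequences by $d$ rounds, show the delayed process's burning sets are contained in the other process's at every time $t$ (because each delayed activator is already burning via its nearby counterpart activated $d$ turns earlier), invoke \cref{lem:time-shifted}, and symmetrize; you merely delay $\cP$ where the paper delays $\cO$, and you spell out the induction in more detail. One small caveat on your closing remark: while the containment $B^{\cD}_t \subseteq B^{\cO}_t$ indeed needs no growth hypothesis, the proof as a whole still depends on controlled growth because \cref{lem:time-shifted}, which you invoke, is proved using it.
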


In this case, we call $(\cG(f), \cP)$ the perturbed process. 

\begin{proof}
By symmetry, it suffices to prove that $\underline{\delta}(\cG(f),\cO) \leq \underline{\delta}(\cG(f),\cP)$ and $\overline{\delta}(\cG(f),\cO) \leq \overline{\delta}(\cG(f),\cP)$. Let $\cD$ be the activator sequence that activates $v_i$ at time $i+d$ for all $i \geq 1$ and activates nothing beforehand, i.e., the delayed process. Then any vertex in $\cD$, at the time of activation, was already burning in the perturbed process by some vertex that (a) is at distance at most $d$, and (b) was activated $d$ turns before. Therefore, $\underline{\delta}(\cG(f),\cD) \leq \underline{\delta}(\cG(f),\cP)$ and $\overline{\delta}(\cG(f),\cD) \leq \overline{\delta}(\cG(f),\cP)$, and once again in combination with Lemma~\ref{lem:time-shifted} this finishes the proof. 
\end{proof}

In summary, Lemmas~\ref{lem:time-shifted}, \ref{lem:turn1}, and \ref{lem:position-shifted} tell us that an activator sequence can be delayed, trimmed, and perturbed without altering the lower and upper burning densities. 

\subsection{Restricting our attention}\label{sec:aux2}

Two of the three main objectives, Theorems~\ref{thm:main} and~\ref{thm:bonus}, involve functions that are \emph{almost} strictly increasing. We take this moment to restrict our attention to functions that are, in fact, strictly increasing. The purpose is to ignore certain corner cases where $f$ might not grow at every step, preventing burning sets to grow outwards in all four directions at a constant rate (e.g., Figure~\ref{fig:stop go}). We provide a simple lemma justifying our restriction.

\begin{lemma}\label{lem:almost -> strictly}
Fix $n_0 \geq 1$ and let $f$ be such that (a) $f(n) \geq n$ for all $n \geq n_0$, and (b) $f(n+1)-f(n) > 0$ for all $n \geq n_0$. Then there is a strictly increasing function $g$ such that (a) $f(n) = g(n)$ for all sufficiently large $n$, and (b) $P(\cG(f)) = P(\cG(g))$.
\end{lemma}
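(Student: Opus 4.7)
My plan is to construct a strictly increasing function $g$ that coincides with $f$ from index $n_0$ onward, and then to deduce $P(\cG(f)) = P(\cG(g))$ from the fact that the two graph sequences share an identical tail. Concretely, I will set $g(n) := f(n)$ for $n \geq n_0$ and $g(n) := f(n_0) - (n_0 - n)$ for $1 \leq n < n_0$. Hypothesis (a) gives $f(n_0) \geq n_0$, so $g(1) = f(n_0) - n_0 + 1 \geq 1$ and $g$ maps $\N$ into itself; strict monotonicity holds for $n < n_0$ by the explicit formula, for $n \geq n_0$ by hypothesis (b), and across the boundary since $g(n_0 + 1) = f(n_0 + 1) > f(n_0) = g(n_0)$. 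This immediately yields the first half of the lemma, namely $f(n) = g(n)$ for all $n \geq n_0$ and $g$ strictly increasing.

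For the equality $P(\cG(f)) = P(\cG(g))$, the crucial observation is that the grids $[-f(n),f(n)]^2$ and $[-g(n),g(n)]^2$ are identical for every $n \geq n_0$. To show $P(\cG(f)) \subseteq P(\cG(g))$, starting from an activator sequence $\cV = (v_n)$ on $\cG(f)$ with density $\rho$, I construct $\cV' = (v_n')$ on $\cG(g)$ by taking $v_n' = v_n$ for $n \geq n_0$ (valid because the vertex sets coincide there) and, for $n < n_0$, replacing $v_n$ with the nearest vertex of $[-g(n),g(n)]^2$, which lies within $L_1$-distance $n_0 - n$ of $v_n$. The reverse inclusion $P(\cG(g)) \subseteq P(\cG(f))$ is handled symmetrically. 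From time $n_0$ onward both processes evolve over identical graph sequences with identical activators, so the only source of discrepancy is the burning set at time $n_0$, whose symmetric difference has size at most $(2f(n_0)+1)^2$, a constant depending only on $f$ and $n_0$.

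The main obstacle is controlling the propagation of this bounded initial discrepancy. A naive forward-propagation bound only gives that the symmetric difference of the two burning sets at time $n$ is $O(n^2)$, which is of the same order as $(2f(n)+1)^2$ in the extremal case where $f$ grows only linearly, so one cannot immediately conclude that the limiting densities coincide. I would resolve this by first showing that every $\rho \in P(\cG(f))$ is already witnessed by some activator sequence that is empty on turns $1, \ldots, n_0 - 1$; this reduction, proved by comparing the bounded burning contributed by trimmed early activators against the growth of the grid (and observing that such burning eventually lies inside the ball swept out by later activators), would reduce the comparison to identical activator sequences on identical tail graph sequences and hence yield equality of the densities in both processes.
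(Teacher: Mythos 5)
Your overall route is the same as the paper's: define $g$ to agree with $f$ for all $n \geq n_0$ (your explicit choice of the early values is immaterial; the paper simply takes $g(n)=n$ there), reduce to activator sequences that pass on turns $1,\dots,n_0$, and then observe that the two processes are literally identical thereafter. The paper gets this reduction for free from its trimming lemma (Lemma~\ref{lem:turn1}); you have to prove it, and that is where your sketch has a genuine gap. The justification you offer --- that the burning contributed by the discarded early activators ``eventually lies inside the ball swept out by later activators'' --- is false as stated: at every time $t$, the ball $B_t[i]$ with $i<n_0$ has radius $t-i$, which strictly exceeds the radius $t-j$ of the ball around any surviving activator $v_j$ with $j\geq n_0$, so containment at a common time step never occurs. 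This is not a cosmetic imprecision; it is exactly the point where the $O(t^2)$-versus-$\Theta(t^2)$ difficulty you correctly diagnosed for the naive argument reappears, so the reduction is currently asserted rather than proved. (You also need to dispose of the degenerate case in which \emph{all} non-empty activators occur before turn $n_0$, since then the trimmed sequence is empty.)

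Two standard ways to close the gap. (i) The paper's: choose $\ell$ so that the trimmed process has burned all of $G_{n_0}$ by turn $\ell$; then the original process \emph{delayed by $\ell$ turns} is dominated by the trimmed process, and one concludes with the delay lemma (Lemma~\ref{lem:time-shifted}). (ii) A direct estimate that works under exactly the hypotheses of the lemma: for $t\geq\ell$, the ball of radius $t-\ell$ about each $v_i$ with $i<n_0$ is already contained in $B^{\cT}_t$, so $B_t[i]\setminus B^{\cT}_t$ lies in an annulus of width at most $\ell$ and outer radius at most $t$, whence
\[
|B^{\cO}_t\setminus B^{\cT}_t| = O(n_0\,\ell\, t) = o\bigl((2t+1)^2\bigr) = o\bigl(|V(G_t)|\bigr),
\]
using $f(t)\geq t$. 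Either of these completes your argument; as written, the key step does not go through.
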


\begin{proof}
Let $\rho \in P(\cG(f))$ and let $\cO$ be an activator sequence for $\cG(f)$ such that $\delta(\cG(f),\cV) = \rho$. Define the activator sequence $\cT$ by replacing the first $n_0$ entries of $\cV$ with $\emptyset$. By Lemma~\ref{lem:turn1}, $\delta(\cG(f), \cO) = \delta(\cG(f), \cT)$. Then define $g$ by $g(n) = n$ for $n < n_0$ and $g(n) = f(n)$ for $n \geq n_0$. Because $\cG(g) = \cG(f)$ on each round after $n_0$, the burning sets of $(\cG(f), \cT)$ are identical to those of $(\cG(g), \cT)$ so $\delta(\cG(g), \cT) = \rho$. By a symmetric argument, if $\cO$ achieves $\delta(\cG(g),\cO) = \rho$ then there exists $\cT$ such that $\delta(\cG(f),\cT) = \rho$.
\end{proof}

An immediate benefit of this restriction is seen in the next lemma.

\begin{lemma}\label{lem:burnt bound}
Let $f$ be strictly increasing. For any activator sequence $(v_n, n\geq 1)$ on $\cG(f)$,
\begin{enumerate}
\item[(a)] $B_t[i]$ is the ball of radius $t-i$ centred at $v_i$ in $\Z \times \Z$, and
\item[(b)] $|B_t| \leq \frac{2t^3+t}{3}$.
\end{enumerate}
\end{lemma}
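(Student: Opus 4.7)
The plan is to deduce (a) from a simple containment argument using strict monotonicity, and then derive (b) by summing the sizes of the balls $B_t[i]$ via a union bound.

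For part (a), I would argue as follows. First observe that because $f$ maps $\N$ to $\N$ and is strictly increasing, $f(j+1)-f(j)\geq 1$ for all $j\geq 1$, so telescoping yields $f(t)-f(i)\geq t-i$ for all $t\geq i$. Since $v_i\in V(G_i)=[-f(i),f(i)]^2$, the $L_1$-ball of radius $t-i$ around $v_i$ in $\Z\times\Z$ lies inside $[-f(i)-(t-i),\,f(i)+(t-i)]^2\subseteq[-f(t),f(t)]^2=V(G_t)$. With this containment in hand, I would do induction on $t\geq i$: the base case $t=i$ gives $B_i[i]=\{v_i\}$, the ball of radius $0$. For the inductive step, $B_{t+1}[i]$ is obtained from $B_t[i]$ by adding all its neighbours in $G_{t+1}$; since the ball of radius $t+1-i$ around $v_i$ is contained in $V(G_{t+1})$ by the observation above, no boundary of the grid interferes, and the updated set is exactly the ball of radius $t+1-i$.

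For part (b), I would combine (a) with the union bound $|B_t|\leq\sum_{i=1}^t |B_t[i]|$. The $L_1$-ball of radius $r$ in $\Z\times\Z$ has exactly $2r^2+2r+1$ vertices, so by (a), $|B_t[i]|=2(t-i)^2+2(t-i)+1$. Substituting $j=t-i$,
\[
|B_t|\;\leq\;\sum_{j=0}^{t-1}\bigl(2j^2+2j+1\bigr)\;=\;\frac{(t-1)t(2t-1)}{3}+(t-1)t+t\;=\;\frac{2t^3+t}{3},
\]
after collecting terms. This is exactly the claimed bound (and it is in fact tight when the balls $B_t[i]$ are pairwise disjoint).

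I do not anticipate any real obstacle here; the only subtlety is justifying that the ball around $v_i$ never gets clipped by the grid boundary, which is precisely where the strictly increasing hypothesis on $f$ is used. The bound in (b) is then a routine arithmetic consequence of (a) together with the size of an $L_1$-ball in the plane.
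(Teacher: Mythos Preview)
Your argument is correct and follows essentially the same route as the paper: part~(a) is deduced from the observation that strict monotonicity of $f$ guarantees the $L_1$-ball of radius $t-i$ about $v_i$ sits inside $G_t$, and part~(b) is obtained by the union bound $|B_t|\le\sum_i|B_t[i]|$ together with the exact count of lattice points in an $L_1$-ball (the paper writes this count as $(t-i+1)^2+(t-i)^2$, which equals your $2(t-i)^2+2(t-i)+1$). The only minor omission is the case $v_i=\emptyset$, where $B_t[i]=\emptyset$ and the inequality holds trivially; the paper notes this explicitly, but it does not affect your bound.
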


\begin{proof}
Statement (a) follows immediately from the fact that if $f$ is strictly increasing and $v_i \in G_i$ then all points in $\Z \times \Z$ at distance $k$ from $v_i$ must be in $G_{i+k}$.

For statement (b), we have that $|B_t[i]| = (t-i+1)^2 + (t-i)^2$ if $v_i \neq \emptyset$, and $|B_t[i]| = 0$ otherwise. Hence, 
\begin{align*}
|B_t| 
&\leq
\sum_{i=1}^t |B_t[i]|
=
\sum_{i=1}^t \Big( (t-i+1)^2 + (t-i)^2 \Big)\\
&=
\sum_{j=1}^t \Big( j^2 + (j-1)^2 \Big)
= \sum_{j=1}^t j^2 + \sum_{j=0}^{t-1} j^2 \\
&=
\frac{t(t+1)(2t+1)}{6} + \frac{(t-1)t(2t-1)}{6}\\
&=
\frac{2t^3 + t}{3} \,,
\end{align*}
which finishes the proof of the lemma.
\end{proof}

Note that Lemma~\ref{lem:burnt bound} immediately implies Theorem~\ref{thm:main} case (d), as the number of burning vertices is always $O(t^3)$ and the number of vertices in this case is $\omega(t^3)$. 

\subsection{Key lemmas part 2: manipulating the growth function}\label{sec:aux3}

The first collection of key lemmas were all for the goal of modifying activator sequences. We now turn to lemmas that allow us to modify growth functions. Thanks to Lemma~\ref{lem:almost -> strictly}, we may assume growth functions are strictly increasing in the proofs of these lemmas.

\begin{lemma}\label{lem:f,g same densities}
Let $f,g$ be increasing functions such that at least one of $f,g$ has controlled growth and $\lim_{n \to \infty} f(n)/g(n) = 1$. Then 
\begin{enumerate}
\item[(a)] $f$ and $g$ both have controlled growth, and
\item[(b)] $P(\cG(f)) = P(\cG(g))$. 
\end{enumerate}
\end{lemma}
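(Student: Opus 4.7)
Assume by symmetry that $f$ has controlled growth. For any $m, n$, decompose
\[
\frac{g(m)}{g(n)} \;=\; \frac{g(m)}{f(m)} \cdot \frac{f(m)}{f(n)} \cdot \frac{f(n)}{g(n)}.
\]
Condition (i) is immediate from $g$ being increasing. For (ii), take $m = n + \epsilon(n)$ with $\epsilon(n) = o(n)$: the outer factors tend to $1$ by $f \sim g$ and the middle factor tends to $1$ by condition (ii) for $f$, giving $g(n+\epsilon(n)) = (1+o(1))g(n)$. For (iii), with $m = n+cn$, the middle factor is at least $1+d$ eventually while the outer factors are $1 + o(1)$, so $g(n+cn) \geq (1 + d/2)\,g(n)$ for large $n$.

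\textbf{Part (b).} By Lemma~\ref{lem:almost -> strictly} I may assume $f, g$ are strictly increasing, so Lemma~\ref{lem:burnt bound}(a) identifies $B_t[i]$ with the $L_1$-ball of radius $t-i$ around $v_i$ in $\Z^2$. By symmetry it suffices to show $P(\cG(f)) \subseteq P(\cG(g))$. First I establish the \emph{easy} containment: if $f(n) \leq g(n)$ for all large $n$, then any $\cO$ valid on $\cG(f)$ (trimmed via Lemma~\ref{lem:turn1} if needed) is also valid on $\cG(g)$, produces the same $B_t$, and satisfies
\[
\frac{|B_t|}{|V(G_t^g)|} \;=\; \frac{|B_t|}{|V(G_t^f)|} \cdot \frac{|V(G_t^f)|}{|V(G_t^g)|} \;\longrightarrow\; \rho \cdot 1 = \rho,
\]
where the second ratio converges by $f \sim g$. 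Introducing the intermediate functions $h_+(n) := \max(f(n), g(n))$ and $h_-(n) := \min(f(n), g(n))$ (which inherit controlled growth by part~(a)), and applying this easy direction four times, yields $P(\cG(h_-)) \subseteq P(\cG(f)) \cap P(\cG(g))$ and $P(\cG(f)) \cup P(\cG(g)) \subseteq P(\cG(h_+))$. Hence the claim reduces to the hard containment $P(\cG(h_+)) \subseteq P(\cG(h_-))$ in the comparable case $h_- \leq h_+$ with $h_-/h_+ \to 1$.

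For this hard direction, given $\cO = (v_n)$ on $\cG(h_+)$ achieving density $\rho$, I would build $\cO' = (v'_n)$ on $\cG(h_-)$ by keeping each good activator (those with $v_n \in G_n^{h_-}$) fixed and projecting each bad activator to the nearest vertex of $G_n^{h_-}$. The pointwise shift is at most $h_+(n) - h_-(n) = o(h_+(n))$, small relative to the grid but unbounded in absolute terms, so Lemma~\ref{lem:position-shifted} does not apply directly. I would then compare $B_t^\cO$ (on $\cG(h_+)$) and $B_t^{\cO'}$ (on $\cG(h_-)$), both expressible as unions of $L_1$-balls by Lemma~\ref{lem:burnt bound}(a), and deduce $\delta(\cG(h_-), \cO') = \rho$ from a symmetric difference estimate.

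\textbf{Main obstacle.} The step I expect to be hardest is showing $|B_t^\cO \triangle B_t^{\cO'}| = o(|V(G_t^{h_-})|)$. The naive per-index bound $|B_t[i] \triangle B_t'[i]| = O((t-i)(h_+(i)-h_-(i)))$, summed over bad indices, can exceed $|V(G_t^{h_-})|$ whenever $h_-(t)$ grows slower than $t^{3/2}$. To close this gap I would exploit that every bad activator sits in a thin boundary annulus of width $o(h_+(n))$, so (when $\rho > 0$) its ball overlaps heavily with those of nearby good activators and its genuinely new contribution is small; and controlled growth ensures this annulus is a vanishing fraction of $G_t^{h_-}$. A parallel route would replace projection with a short delay $m(n) - n = o(n)$, justified by a quantitative inverse of condition~(iii) for $h_-$, and carry out an overlap-based analysis in the same vein.
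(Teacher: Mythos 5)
Part~(a) of your proposal is correct and is essentially the paper's own argument (factor through $f$, use $f/g\to 1$ on the outer terms and controlled growth of $f$ on the middle term). In part~(b), the reduction to the comparable case via $h_{\pm}=\max/\min(f,g)$ is sound, and your ``easy'' pointwise containment is exactly Lemma~\ref{lem:f,g frac density} with $c=1$. But the hard containment $P(\cG(h_+))\subseteq P(\cG(h_-))$ is where the proposal stops being a proof, and you say so yourself: the estimate $|B_t^{\cO}\,\triangle\,B_t^{\cO'}|=o(|V(G_t^{h_-})|)$ is not established, and the overlap heuristic offered in its place does not close the gap. Projecting a bad activator inward by $h_+(n)-h_-(n)$ (which is $o(h_+(n))$ but unbounded) displaces its ball by $\Theta\bigl((t-n)(h_+(n)-h_-(n))\bigr)$ vertices, and there is no general reason this loss is absorbed by ``nearby good activators'': an optimal sequence may place $\Theta(t)$ activators in the boundary annulus with pairwise disjoint balls of radius $\Theta(t)$ --- this is precisely the shape of the extremal strategy in Section~\ref{sec:lowerbound}, whose new balls are deliberately kept disjoint and contribute a constant fraction of the density in the critical regime $f(n)=\Theta(n^{3/2})$. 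In that situation there are no good activators nearby, the per-ball symmetric differences genuinely sum to something exceeding $|V(G_t^{h_-})|$, and your argument also says nothing when $\rho=0$. So this is a missing idea, not a routine verification.

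The route you mention only in one closing sentence --- delaying rather than projecting --- is the paper's actual proof, and it bypasses the symmetric-difference issue entirely. The paper defines $\cV'$ to activate $v_i$ at the first turn $n$ with $v_i\in V(G_n^{(g)})$, proves that the least $\epsilon(n)$ with $g(n+\epsilon(n))\ge f(n)$ satisfies $\epsilon(n)=o(n)$ (your ``quantitative inverse of condition~(iii)'': if $\epsilon(n_i)\ge cn_i$ along a subsequence, minimality plus condition~(ii) forces $g(n_i+cn_i)=(1+o(1))g(n_i)$, contradicting condition~(iii)), and then uses only monotonicity: $B_t'$ contains everything $\cV$ burned by turn $t-\max_{i\le t}\epsilon(i)=(1-o(1))t$, and condition~(ii) converts this into matching limits of the density ratios. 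If you replace the projection step by this delay construction and carry out those three steps, your write-up becomes a proof; as it stands, the central step is an acknowledged obstacle rather than an argument.
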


\begin{proof}
Beginning with (a), since $f$ and $g$ are increasing they both satisfy condition~(i) of the controlled growth requirement. Now suppose $f$ has controlled growth and let $\epsilon: \N \to \N$ satisfy $\epsilon
(n) = o(n)$. Then
\begin{align*}
g(n+\epsilon(n)) 
&= 
(1+o(1)) f(n+\epsilon(n))\\
&=
(1+o(1)) f(n)\\
&=
(1+o(1)) g(n) \,.
\end{align*}
Therefore, $g$ satisfies condition~(ii) of the controlled growth requirement. 

Now fix $c > 0$ and let $d > 0$ be such that for sufficiently large $n$, $f(n+cn) \geq (1+d)f(n)$. Then
\begin{align*}
g(n+cn) 
&= 
(1+o(1)) f(n+cn)\\
&\geq 
(1+o(1)) (1+d)f(n)\\
&=
(1+d+o(1))g(n) \,.
\end{align*}
Thus, for sufficiently large $n$, $g(n+cn) \geq (1+d/2) g(n)$, meaning $g$ satisfies condition~(iii) of the controlled growth requirement. Therefore, if $f$ has controlled growth then so does $g$ and, by a symmetrical argument, if $g$ has controlled growth then so does $f$. 

Continuing with (b), write $\cG(f) = (G^{(f)}_n,n \geq 1)$ and $\cG(g) = (G^{(g)}_n,n \geq 1)$. Let $\rho \in P(\cG(f))$ and let $\cV = (v_n,n\geq 1)$ be an activator sequence on $\cG(f)$ such that $\delta(\cG(f),\cV) = \rho$. Define the activator sequence $\cV' = (v_n', n \geq 1)$ on $\cG(g)$ inductively as follows. First, if $v_1 \in V(G^{(g)}_1)$ then set $v_1'=v_1$, and otherwise set $v_1' = v_0 := (0,0)$. Then, for $n \geq 1$ and $k \geq 0$, if $v_n' = v_{n-k}$, set $v_{n+1}' = v_{n-k+1}$ if $v_{n-k+1} \in V(G^{(g)}_{n+1})$, and otherwise set $v_{n+1}' = \emptyset$. In words, $\cV'$ burns the same vertices as $\cV$ in the same order, except that $\cV'$ ``waits'' until it is able to burn the correct vertices in $\cG(g)$. We claim that $\underline{\delta}\big( \cG(g), \cV' \big) = \overline{\delta}\big( \cG(g), \cV' \big) = \rho$. 

Firstly, note that $\cV'$ is a well defined activator sequence on $\cG(f)$ as well as on $\cG(g)$. Moreover, after any turn $t$, the number of vertices burned by $\cV'$ in $\cG(f)$ is at most the number of vertices burned by $\cV$. Thus, $\overline{\delta}\big( \cG(f), \cV' \big) \leq \delta\big( \cG(f), \cV \big) = \rho$. Next, since $|V(G^{(g)}_n)| = (1+o(1))|V(G^{(f)}_n)|$, and since the sets of burning vertices in $(\cG(f),\cV')$ and in $(\cG(g),\cV')$ are identical at all times, we get that 
\[
\overline{\delta}\big( \cG(g), \cV' \big) = \overline{\delta}\big( \cG(f), \cV' \big) \leq \delta\big( \cG(f), \cV \big) = \rho \,.
\]

We are left to show that $\underline{\delta}\big( \cG(g), \cV' \big) \geq \rho$. For each $n \geq 1$, define $\epsilon(n)$ to be the smallest non-negative integer that satisfies $g(n+\epsilon(n)) \geq f(n)$. We claim that $\epsilon(n) = o(n)$. Suppose, to the contrary, that there exists a constant $c > 0$ and an increasing sequence $(n_i,i\geq 1)$ such that $\epsilon(n_i) \geq cn_i$ for all $i \geq 1$. Then, for each $i \geq 1$, by the minimality of $\epsilon$ we have $g(n_i+\epsilon(n_i)-1) < f(n_i)$. By condition~(ii) of the controlled growth requirement, this inequality implies that $g(n_i+\epsilon(n_i)) = (1+o(1)) f(n_i)$, implying further that 
\[
g(n_i + cn_i) \leq g(n_i+\epsilon(n_i)) = (1+o(1))f(n_i) = (1+o(1))g(n_i) \,.
\] 
However, by condition~(iii) of the controlled growth requirement, we can find a constant $d > 0$ such that for sufficiently large $n$, $g(n + cn) \geq (1+d)g(n)$. Therefore, the assumption that $\epsilon(n) \neq o(n)$ leads to a contradiction, and thus $\epsilon(n) = o(n)$.

Lastly, by the definition of $\epsilon$, the number of unique activator vertices in $(v_i,i \in [n]) \subseteq \cV$ that are not in $(v_i',i \in [n])$ is at most
\[
\epsilon_{\max}(i) := \max_{i \in [n]} \epsilon(i) \,,
\]
and thus the number of burning vertices in $(\cG(g),\cV')$ by the end of turn $t$ is at least the number of burning vertices in $(\cG(f),\cV)$ by the end of turn $t-\epsilon_{\max}(t) = (1-o(1))t$. Letting $B_t$ and $B_t'$ be the respective number of burning vertices in $(\cG(f),\cV)$ and $(\cG(g),\cV')$ by the end of turn $t$, we have that 
\begin{align*}
\frac{|B_t'|}{|V(G^{(g)}_t)|}
&\geq
\frac{|B_{t-\epsilon_{\max}(t)}|}{|V(G^{(g)}_t)|}\\
&= 
(1+o(1))\frac{|B_{t-\epsilon_{\max}(t)}|}{|V(G^{(f)}_t)|}\\
&= 
(1+o(1))\frac{|B_{t-\epsilon_{\max}(t)}|}{|V(G^{(f)}_{t-\epsilon_{\max}(t)})|} \,,
\end{align*}
the last equality holding since $f$ satisfies condition~(i) of the controlled growth requirement. Therefore,
\[
\liminf_{t \to \infty} \frac{|B_t'|}{|V(G^{(g)}_t)|}
\geq
\liminf_{t \to \infty} \frac{|B_t|}{|V(G^{(f)}_t)|} \,,
\]
implying $\underline{\delta}\big( \cG(g), \cV' \big) \geq \delta\big( \cG(f), \cV \big) = \rho$.

Therefore, $\delta \big( \cG(g), \cV' \big)$ is well defined and equals $\rho$. By symmetry, we have shown that $P(\cG(f)) = P(\cG(g))$, and this concludes the proof.  
\end{proof}

The next lemma is our key to converting an activator sequence $\cV$ on $\cG(f)$ with $\delta(\cG(f),\cV) = \rho$ into a new activator sequence $\cV'$ on $\cG(f)$ with $\delta(\cG(f),\cV') = c^2 \cdot \rho$ for any $c \in (0,1)$.

\begin{lemma} \label{lem:f,g frac density}
Let $f,g : \N \to \N$ be strictly increasing functions such that $f \leq g$ (point-wise) and $\lim_{n\to \infty} f(n)/g(n) = c$ for some $c \in (0,1)$. Then $\rho \in P(\cG(f))$ implies $c^2 \rho \in P(\cG(g))$. 
\end{lemma}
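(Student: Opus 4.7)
The plan is to reuse the very activator sequence that witnesses $\rho \in P(\cG(f))$ as an activator sequence on $\cG(g)$, and to observe that while the numerator of the density (the burning set) stays the same, the denominator is inflated by a factor of $(g(n)/f(n))^2 \to 1/c^2$. Concretely, I would fix an activator sequence $\cV = (v_n, n \geq 1)$ on $\cG(f)$ with $\delta(\cG(f), \cV) = \rho$. Because $f \leq g$ pointwise, $V(G^{(f)}_n) \subseteq V(G^{(g)}_n)$ for every $n$, so $\cV$ is automatically a non-empty activator sequence on $\cG(g)$ as well, and there is no need to modify it in any way.

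The key step is to show that the burning sets produced by $\cV$ are identical in the two processes at every time $t$. Thanks to the convention fixed by \cref{lem:almost -> strictly}, I may assume both $f$ and $g$ are strictly increasing. Then \cref{lem:burnt bound}(a) applies to each process and gives that, for each $i$, the contribution $B_t[i]$ is simply the ball of radius $t-i$ centred at $v_i$ in $\Z \times \Z$. Since this description depends only on $v_i$ and $t-i$ and not on the ambient grid, $B^{(f)}_t[i] = B^{(g)}_t[i]$; taking unions over $i \in [t]$ gives $B^{(f)}_t = B^{(g)}_t$ for every $t$.

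With this in hand, the computation of the density in $\cG(g)$ is immediate. Writing $|V(G^{(f)}_t)| = (2f(t)+1)^2$ and similarly for $g$, I would express
$$
\frac{|B^{(g)}_t|}{|V(G^{(g)}_t)|} \;=\; \frac{|B^{(f)}_t|}{|V(G^{(f)}_t)|} \cdot \frac{(2f(t)+1)^2}{(2g(t)+1)^2},
$$
and observe that as $t \to \infty$ the first factor tends to $\rho$ by hypothesis while the second tends to $c^2$ since $f(n)/g(n) \to c$. Hence $\delta(\cG(g), \cV) = c^2 \rho$, establishing $c^2 \rho \in P(\cG(g))$.

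There is essentially no serious obstacle here: the entire argument reduces to the clean point that, because $f$ is strictly increasing, the ball $B_t[i]$ is never truncated by the boundary of $G^{(f)}_t$, and likewise on the $g$ side. The only routine items to verify are that $\cV$ is a legitimate non-empty activator sequence on $\cG(g)$ (immediate from the containment $V(G^{(f)}_n) \subseteq V(G^{(g)}_n)$) and that both limits in the displayed product genuinely exist, so that $\delta(\cG(g), \cV)$ is well defined and equals $c^2 \rho$.
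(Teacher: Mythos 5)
Your proposal is correct and follows essentially the same route as the paper: reuse $\cV$ on $\cG(g)$, note that strict monotonicity of both $f$ and $g$ (via \cref{lem:burnt bound}(a)) makes the burning sets identical in the two processes, and then rescale the denominator by $(f(t)/g(t))^2 \to c^2$. Your explicit appeal to \cref{lem:burnt bound}(a) to justify the equality of burning sets is exactly the justification the paper leaves implicit in the phrase ``as $f$ and $g$ are both strictly increasing.''
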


\begin{proof}
Let $\cV$ be an activator sequence on $\cG(f)$ such that $\delta(\cG(f),\cV) = \rho$ and consider $\cV$ as an activator sequence on $\cG(g)$. Then $\cV$ is well defined as $[-f(n),f(n)]^2 \subseteq [-g(n),g(n)]^2$ for all $n \geq 1$. Furthermore, as $f$ and $g$ are both strictly increasing, the set of burning vertices $B_t$ by the end of turn $t$ is identical in $(\cG(f),\cV)$ and in $(\cG(g),\cV)$. Therefore, writing $\cG(f) = (G^{(f)}_n,n \geq 1)$ and $\cG(g) = (G^{(g)}_n,n \geq 1)$, we have 
\begin{align*}
\frac{|B_t|}{|V(G^{(f)}_t)|}
&=
\frac{|B_t|}{(2f(t)+1)^2}\\
&=
(1+o(1)) \frac{1}{c^2} \frac{|B_t|}{(2g(t)+1)^2}\\
&=
(1+o(1)) \frac{1}{c^2} \frac{|B_t|}{|V(G^{(g)}_t)|} \,,
\end{align*}
implying that 
\[
\delta(\cG(f),\cV) = c^2 \delta(\cG(g),\cV) = c^2 \rho \,,
\]
and this concludes the proof. 
\end{proof}

\subsection{The need for controlled growth}\label{sec:aux4}

We provide an example showing that at least \emph{some} form of restriction on the growth of $f$ is necessary for attaining burning densities on $\cG(f)$.

\begin{example}\label{ex:need smoothness}
Let $g(n) = \lceil n^{4/3} \rceil$ and define $f:\N \to \N$ as $f(1) = g(1) =1$ and 
\[
f(n) = 
\begin{cases}
g(n) & \text{if } n = 2^k \text{ for some } k \in \N \text{, and}\\
f(n-1) & \text{otherwise.}
\end{cases}
\]
Fix $n$ and let $k$ be the largest integer such that $2^k \le n$. Then
\[ 
\lceil n^{4/3} \rceil = g(n) \ge f(n) = (2^k)^{4/3} = \frac{1}{2^{4/3}} (2^{k+1})^{4/3} \ge  \frac{1}{2^{4/3}} n^{4/3},
\]
so $f(n) = \Theta\left( n^{4/3} \right)$ and thus $f(n) = \omega(n)$ and $f(n) = o(n^{3/2})$. However, for any activator sequence $\cV$ in $\cG(f)$, there are infinitely many turns $t$ with the property that
\[
\frac{|B_t|}{|V(G_t)|} 
\geq \frac{(2^{4/3})^2 |B_{t}|}{|V(G_{t+1})|} + o(1) 
\geq \frac{(2^{4/3})^2 |B_{t+1}|}{4 |V(G_{t+1})|} + o(1)
= \frac{2^{2/3} |B_{t+1}|}{|V(G_{t+1})|} + o(1)\,,
\]
the second inequality coming from the fact that $|B_{t+1}| \leq 4|B_t|+1$, as every vertex burning by turn $t$ can burn at most 4 other vertices in turn $t+1$. Therefore, either $\frac{|B_t|}{|V(G_t)|}$ diverges or converges to 0, and in either case we do not have $P(\cG(f)) = [0,1]$.
\end{example}

As \cref{ex:need smoothness} demonstrates, without condition~(ii) of the controlled growth requirement, $f$ may have sudden ``jumps'' corresponding to $|B_t|/|V(G_t)|$ fluctuating infinitely often. However, we do not claim that the controlled growth requirement is necessary for Theorem~\ref{thm:bonus}, Lemma~\ref{lem:f,g same densities}, or Lemma~\ref{lem:f,g frac density}. We discuss this topic further in the concluding section of the paper. 

\section{Proof of Theorem~\ref{thm:bonus}}\label{sec:wait and see}

We will make use of the following result of Mitsche, Pra\l{}at, and Roshanbin~\cite{MPR17} regarding the burning number of fixed grid graphs. Note that this result has since been generalized to higher dimensions in~\cite{BC25}.

\begin{theorem}[Mitsche, Pra\l{}at, Roshanbin, \cite{MPR17}] \label{thm:MPR}
\[b([m] \times [n]) = \begin{cases} (1+o(1))\sqrt[3]{3mn/2}, & n \ge m = \omega(\sqrt{n})\\ \Theta(\sqrt{n}), & m = O(\sqrt{n}) \end{cases} \]
\end{theorem}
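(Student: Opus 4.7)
The plan is to reformulate burning as a covering problem: $b(G)\le k$ if and only if there exist vertices $x_1,\dots,x_k\in V(G)$ with $V(G)=\bigcup_{i=1}^{k}B_G(x_i,k-i)$, where $B_G(v,r)$ denotes the closed ball of radius $r$ in the graph metric. So the question becomes: what is the smallest $k$ for which $[m]\times[n]$ can be covered by $L_1$-balls of radii $k-1,k-2,\dots,1,0$? Under this lens, an unrestricted $L_1$-ball of radius $r$ in $\Z^2$ has exactly $2r^2+2r+1$ vertices, while a ball of radius $r\ge m$ restricted to a grid of width $m$ has at most $m(2r+1)$ vertices. These two capacities drive the two regimes.

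For the lower bound, I would run a volume argument in each regime. In the wide regime $m=\omega(\sqrt{n})$, most useful balls (those of radius $o(m)$) fit entirely in the grid, so
\begin{equation*}
mn \;\le\; \sum_{r=0}^{k-1}\bigl(2r^2+2r+1\bigr) \;=\; \frac{2k^3+k}{3},
\end{equation*}
which forces $k\ge (1-o(1))\sqrt[3]{3mn/2}$. In the thin regime $m=O(\sqrt{n})$ this bound is too weak; instead I would split the sum at $r=m$, bounding balls of radius $r\le m$ by $O(m^3)=O(n^{3/2})$ and balls of radius $r>m$ by $m(2r+1)$, to obtain $mn\le mk^2+O(m^3)$, hence $k=\Omega(\sqrt{n})$.

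For the upper bound I would give a constructive tiling in each regime. In the thin regime, set $k=\lceil(1+\varepsilon)\sqrt{n}\rceil$ and place all $k$ fire sources along the central row $\lceil m/2\rceil$. A ball of radius $r\ge m/2$ centred on that row contains a full-width horizontal strip of length at least $2(r-\lfloor m/2\rfloor)+1$; summing these strip lengths over $i=1,\dots,k$ yields $k^2-mk+O(k)\ge n$, and the sources' $y$-coordinates can be chosen so the strips tile $[1,n]$ exactly. In the wide regime, pick $k=\lceil(1+\varepsilon)\sqrt[3]{3mn/2}\rceil$ and use the elementary fact that an $L_1$-ball of radius $r$ covers any axis-aligned rectangle with side-sum at most $2r+2$. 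I would then partition $[m]\times[n]$ into $k$ axis-aligned rectangles $R_1,\dots,R_k$, with $R_i$ having side-sum at most $2(k-i)+2$, and centre the $i$-th ball at the centre of $R_i$. Concretely, I would cut the grid first into a small number of vertical slabs whose widths decrease geometrically, and then inside each slab partition into horizontal strips whose heights are tuned to the remaining radius budget.

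The main obstacle is the wide-regime upper bound: achieving the sharp constant $\sqrt[3]{3/2}$ requires a tiling in which the sum $\sum_i a_i b_i$ of rectangle areas matches $mn$ while each $a_i+b_i\le 2(k-i)+2$. The rough order $k=\Theta((mn)^{1/3})$ falls out of any tiling with $a_i\approx b_i\approx 2(k-i)/2$, but the optimal constant is tight: every unit of ``boundary slack'' costs $\Theta(r)$ area per ball, so the analysis must control total slack by $o(k^3)$. My plan is to choose slab widths and strip heights so that boundary effects are concentrated in an $o(k)$ fraction of the rectangles, and to verify the area accounting directly against the ball-capacity identity $\sum_{r=0}^{k-1}(2r^2+2r+1)=(2k^3+k)/3$; the rest of the argument is bookkeeping.
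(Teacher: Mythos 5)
This theorem is quoted from Mitsche, Pra\l{}at, and Roshanbin \cite{MPR17}; the present paper does not reprove it, so your proposal must be judged against the known argument. Your covering reformulation, the unconditional volume bound $mn\le\sum_{r=0}^{k-1}(2r^2+2r+1)=(2k^3+k)/3$ in the wide regime, and the thin-regime upper bound (all sources on the central row) are sound. One minor issue: in the thin-regime lower bound, $mn\le mk^2+O(m^3)$ yields $n\le k^2+O(m^2)=k^2+O(n)$, which is vacuous when $m=\Theta(\sqrt n)$ with a large implied constant. The fix is to drop the split and use the width-restricted capacity $|B(v,r)\cap G|\le m(2r+1)$ for \emph{every} radius (equivalently, intersect all balls with a single row of length $n$), giving $mn\le mk^2$ and $k\ge\sqrt n$ directly.

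The serious gap is the wide-regime upper bound. An axis-aligned lattice rectangle contained in an $L_1$-ball of radius $r$ has sides $a,b$ with $a+b\le 2r+2$, hence area $ab\le (r+1)^2$ by AM--GM, which is only about \emph{half} of the ball's $2r^2+2r+1$ cells. Consequently, any partition of $[m]\times[n]$ into $k$ axis-aligned rectangles, the $i$-th covered by the ball of radius $k-i$, forces $mn\le\sum_{r=0}^{k-1}(r+1)^2\sim k^3/3$, i.e.\ $k\ge(1-o(1))\sqrt[3]{3mn}$ --- a factor $2^{1/3}$ above the target constant. No choice of slab widths or strip heights can repair this: the loss is not boundary slack but the inherent AM--GM deficit of inscribing rectangles in diamonds, so the ``bookkeeping'' you defer can never close. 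The correct construction covers the grid with the diamonds themselves: diamonds of a common radius $r$ centred at the points $(ri,rj)$ with $i+j$ odd tile a rectangle with only $o(1)$ relative waste (exactly the tiling this paper uses in Section 4.2.1 and Figure 2), and one then groups the radii $k-1,\dots,0$ into $O(1/\delta)$ blocks of nearly equal radius, tiles a sub-rectangle of the appropriate area with each block, and lets $\delta\to 0$ to recover the constant $\sqrt[3]{3/2}$.
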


We begin the proof of \cref{thm:bonus} by demonstrating how to achieve density 1. 

\begin{lemma}\label{lem:(d) upper}
Let $f : \N \to \N$ satisfy the controlled growth requirement and $f(n) = o(n^{3/2})$. Then there exists an activator sequence $\cV$ on $\cG(f)$ such that $\delta(\cG(f),\cV) = 1$. 
\end{lemma}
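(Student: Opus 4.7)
The plan is to burn the grid in a sequence of ``campaigns,'' each one replaying the optimal burning sequence for a fixed square grid provided by Theorem~\ref{thm:MPR}. That theorem yields a constant $C>0$ such that for all sufficiently large $n$, the burning number of $G_n = [-f(n),f(n)]^2$ is at most $C f(n)^{2/3}$. Because $f(n) = o(n^{3/2})$ we have $f(n)^{2/3} = o(n)$, so each campaign occupies a vanishing fraction of the elapsed time; because $f$ has controlled growth, the grid barely changes size while a campaign runs.

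Concretely, I would fix $t_1$ large and define recursively $t_{k+1} = t_k + \lceil C f(t_k)^{2/3}\rceil$. Let $\cV$ be empty before round $t_1+1$, and during rounds $t_k+1,\ldots,t_{k+1}$ play the optimal burning sequence of the static grid $G_{t_k}$ (padded with $\emptyset$ if that sequence is shorter than the campaign). Since $V(G_{t_k}) \subseteq V(G_s)$ for every $s \ge t_k$ and adjacencies within $V(G_{t_k})$ are the same in every $G_s$, the restriction of the growing burning process to $V(G_{t_k})$ during rounds $t_k,\ldots,t_{k+1}$ reproduces the static burning process on $G_{t_k}$ with this sequence. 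Therefore all of $V(G_{t_k})$ is burning by round $t_{k+1}$.

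To verify that $\delta(\cG(f),\cV) = 1$, the key step is to bound the density uniformly for all large $t$. Since $t_{k+1}-t_k = O(f(t_k)^{2/3}) = o(t_k)$, controlled growth condition~(ii) applied with $\epsilon(n) = \lceil C f(n)^{2/3}\rceil$ gives $f(t_{k+1}) = (1+o(1))f(t_k)$, and iterating, $f(t_{k+2}) = (1+o(1))f(t_k)$. For any $t$ with $t_{k+1} \le t < t_{k+2}$ we have $B_t \supseteq V(G_{t_k})$, so
\[
\frac{|B_t|}{|V(G_t)|} \;\ge\; \frac{(2f(t_k)+1)^2}{(2f(t_{k+2})+1)^2} \;=\; 1 - o(1) \quad \text{as } k \to \infty.
\]
Letting $t \to \infty$ forces $k \to \infty$, so $\ld{\cG(f)}{\cV} \ge 1$; combined with the trivial $\ud{\cG(f)}{\cV} \le 1$, this gives $\delta(\cG(f),\cV) = 1$.

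The main obstacle is reconciling the two time scales: each burning campaign must finish before the grid has grown appreciably. This uses $f(n)=o(n^{3/2})$ precisely to ensure every campaign has length $o(t_k)$, and uses controlled growth condition~(ii) precisely to ensure $f(t_k+o(t_k)) = (1+o(1))f(t_k)$, so that $|V(G_t)|$ cannot inflate relative to $|V(G_{t_k})|$ between consecutive campaigns. (Conditions~(i) and~(iii) of controlled growth play no role here.)
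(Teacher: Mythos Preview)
Your proof is correct and takes essentially the same approach as the paper: define campaign endpoints $t_{k+1} = t_k + \Theta(f(t_k)^{2/3})$ via Theorem~\ref{thm:MPR}, replay an optimal burning sequence for $G_{t_k}$ during campaign $k$, and use controlled growth condition~(ii) together with $f(n)^{2/3} = o(n)$ to deduce $f(t_{k+2})/f(t_k) \to 1$, hence $|B_t|/|V(G_t)| \to 1$. One small remark on your closing parenthetical: condition~(i) is tacitly used when you bound $|V(G_t)| \le |V(G_{t_{k+2}})|$ for $t < t_{k+2}$, so it does play a minor role.
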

\begin{proof}
Regardless of which vertices were activated up to step $t$, Theorem~\ref{thm:MPR} tells us we can burn all of the vertices of $G_t$ in at most $\Theta(f(t)^{2/3})$ steps. This fact motivates the following strategy.
\begin{enumerate}
\item Write $(t_i,i\geq 1)$ for the sequence defined inductively as $t_1 = 1$ and $t_{i+1} = t_i + \tau_i$ where $\tau_i$ is the length of time required to burn $G_{t_i}$ as per Theorem~\ref{thm:MPR}. 
\item For each $i \geq 1$, choose some activator sequence $\cV_i = (v_{t}, t_i < t \leq t_{i+1})$ that burns all of $G_{t_i}$. 
\item Let $\cV$ be the concatenation of $(\cV_i),i \geq 1$. 
\end{enumerate} 
This $\cV$ gives 
\[
\frac{|B_{t_{i+1}}|}{|V(G_{t_{i+1}})|} 
\geq 
\frac{|V(G_{t_{i}})|}{|V(G_{t_{i+1}})|} 
= 
\frac{(2f(t_i)+1)^2}{(2f(t_i + \tau_i)+1)^2}
=
(1+o(1)) \left(\frac{f(t_i)}{f(t_i+\tau_i)}\right)^2 \,.
\]
By Theorem~\ref{thm:MPR}, combined with the fact that $f(n) = o(n^{3/2})$, we know that $\tau_i = \Theta(f(t_i)^{2/3}) = o(t_i)$. Thus, as $f$ satisfies condition~(i) of the controlled growth requirement, we have that
\[
\frac{|B_{t_{i+1}}|}{|V(G_{t_{i+1}})|}  
\geq 
(1+o(1)) \left(\frac{f(t_i)}{(1+o(1))f(t_i)}\right)^2
=
(1+o(1)),
\]
and so 
\[
\lim_{i \to \infty} \frac{|B_{t_{i}}|}{|V(G_{t_{i}})|} = 1 \,.
\]
This proves that $\cV$ contains a subsequence on which the burning density converges to $1$. Finally, for any $t_i < t < t_{i+1}$ we have 
\[
\frac{|B_t|}{|V(G_t)|} \geq \frac{|V(G_{t_{i-1}})|}{|V(G_{t_{i+1}})|} 
\]
and we also have 
\[
|V(G_{t_{i-1}})| 
= 
(2f(t_{i-1})+1)^2
=
(2(f(t_{i+1})-\tau_i-\tau_{i+1})+1)^2
=
(2f(t_{i+1})+1)^2 - O(f(t_{i+1})^{5/3}) \,,
\]
meaning 
\[
\frac{|B_t|}{|V(G_t)|} \geq (1-o(1)) \frac{|V(G_{t_{i+1}})|}{|V(G_{t_{i+1}})|} \to 1 \,,
\]
and this completes the proof. 
\end{proof}

We are now ready to prove \cref{thm:bonus} as a straightforward consequence of Lemmas~\ref{lem:f,g frac density} and~\ref{lem:(d) upper}.

\begin{proof}[Proof of \cref{thm:bonus}]
Let $f: \N \to \N$ be a strictly increasing function with controlled growth such that $f(n) = \omega(n)$ and $f(n) = o(n^{3/2})$. Then $|V(G_n)| = \omega(n^2)$, so the activator sequence $\cV = (v_1,n \ge 1)$, consisting of a single repeating vertex, satisfies $|B_n| = O(n^2)$ and therefore $\delta(\cG(f),\cV) = 0$. 

Now fix $\rho \in (0,1]$ and let $g(n) := \lceil \sqrt{\rho} f(n) \rceil$. Note that, although $g$ satisfies conditions (ii) and (iii) of the controlled growth requirement since $\sqrt{\rho} f(n) \leq g(n) \leq \sqrt{\rho} f(n) + 1$, it is not necessarily true that $g(n+1)>g(n)$ for all sufficiently large $n$. For example, if $f(n+1) = f(n)+1$ for infinitely many $n$, then $g(n+1) = g(n)$ infinitely often. Thus, we define $g^+(n)$ recursively as $g^+(0) = g(0)$ and $g^+(n) := \max\left\{ g(n), g^+(n-1) + 1 \right\}$. Since $g(n) = \omega(n)$, we have that $g^+(n) \leq g(n) + n = (1+o(1))g(n)$. Finally, by \cref{lem:(d) upper}, $1 \in P(\cG(g^+))$, implying by Lemma~\ref{lem:f,g frac density} that $\rho \in P(\cG(f))$, and this concludes the proof. 
\end{proof}

\subsection{Proof of \cref{thm:main} (\ref{case:wait_and_see})}

In this section we use the following lemma to derive \cref{thm:main} (\ref{case:wait_and_see}) from \cref{thm:bonus}. 

\begin{lemma}\label{lem:poly_has_controlled_growth}
Fix $\alpha > 1$ and $c > 0$. Then $f(n) = \lceil cn^\alpha \rceil$ satisfies the controlled growth requirement.
\end{lemma}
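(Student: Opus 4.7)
The plan is to verify each of the three conditions (i), (ii), (iii) of the controlled growth requirement directly for $f(n) = \lceil c n^\alpha \rceil$, exploiting the elementary fact that $(n+k)^\alpha = n^\alpha(1+k/n)^\alpha$ and the standard expansion $(1+x)^\alpha = 1 + \alpha x + O(x^2)$ as $x \to 0$. Because $f$ differs from $c n^\alpha$ by at most $1$, all asymptotic statements about $f(n)$ reduce to statements about the smooth function $c n^\alpha$, up to negligible $O(1)$ error.

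For condition (i), I would observe that $(n+1)^\alpha - n^\alpha = \alpha n^{\alpha-1} + O(n^{\alpha-2})$, which tends to infinity because $\alpha > 1$. Hence for all sufficiently large $n$, $c(n+1)^\alpha - cn^\alpha > 1$, and then $\lceil c(n+1)^\alpha \rceil \geq \lceil cn^\alpha \rceil + 1$. For condition (ii), given any $\epsilon(n) = o(n)$, write
\[
f(n+\epsilon(n)) = c n^\alpha \left( 1 + \frac{\epsilon(n)}{n} \right)^\alpha + O(1) = c n^\alpha \bigl(1 + o(1)\bigr) + O(1) = (1+o(1)) f(n),
\]
where in the last equality I use $f(n) = c n^\alpha + O(1)$ together with $n^\alpha = \omega(1)$ (since $\alpha > 0$).

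For condition (iii), given any $\gamma > 0$ (playing the role of the constant denoted $c$ in the definition), observe that
\[
\frac{f(n+\gamma n)}{f(n)} = \frac{c((1+\gamma)n)^\alpha + O(1)}{c n^\alpha + O(1)} \longrightarrow (1+\gamma)^\alpha
\]
as $n \to \infty$. Since $\alpha > 1$ and $\gamma > 0$, we have $(1+\gamma)^\alpha > 1$, so setting $d = \tfrac{1}{2}\bigl((1+\gamma)^\alpha - 1\bigr) > 0$ gives $f(n+\gamma n) \geq (1+d) f(n)$ for all sufficiently large $n$.

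I do not anticipate any real obstacle: the argument is entirely routine and each of the three conditions follows from a single application of the expansion $(1+x)^\alpha \to 1 + \alpha x + \cdots$ combined with the observation that the ceiling contributes only an $O(1)$ perturbation which is dominated by $n^\alpha$ for $\alpha > 0$. The only mildly delicate point is making sure that in condition (i), one actually uses $\alpha > 1$ (and not merely $\alpha > 0$), since one needs the increments of $c n^\alpha$ to exceed $1$ rather than just to be positive; for $0 < \alpha \leq 1$ the ceiling function can plateau and condition (i) can fail.
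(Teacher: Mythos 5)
Your proof is correct and follows essentially the same route as the paper's: both reduce to the smooth function $cn^\alpha$ (noting the ceiling perturbs by at most $1$), verify (i) by showing the increment $c(n+1)^\alpha - cn^\alpha \sim \alpha c n^{\alpha-1}$ eventually exceeds $1$ (you via a Taylor expansion, the paper via the mean value theorem), verify (ii) by the same $(1+\epsilon(n)/n)^\alpha = 1+o(1)$ expansion, and verify (iii) by observing $(1+\gamma)^\alpha > 1$ (the paper uses the Bernoulli-type bound $(1+b)^\alpha \ge 1+b$ directly, while you pass to the limit of the ratio and halve the gap). Your closing remark that (i) genuinely needs increments exceeding $1$, not merely positivity, is exactly the point the paper's proof also relies on.
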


\begin{proof}
It is enough to show $g(n) = cn^\alpha$ satisfies the controlled growth requirement as $g(n) \leq f(n) \leq g(n)+1$. Condition (i) follows immediately since, by the mean value theorem, $c(n+1)^\alpha - cn^\alpha \in [\alpha c n^{\alpha-1}, \alpha c (n+1)^{\alpha-1}]$, and $\alpha c n^{\alpha - 1} \geq 1$ for sufficiently large $n$. 

Moving on to condition (ii), we have that
$$
g(n+\epsilon(n)) 
= 
c(n+\epsilon(n))^\alpha
=
(1+o(1)) cn^\alpha
=
(1+o(1)) g(n)\,,
$$
and so $g$ satisfies condition~(ii) of the controlled growth requirement. 

For condition~(iii), fix a constant $b > 0$. Then, as $\alpha \geq 1$,
\[
g(n+bn) 
=
c(n+bn)^\alpha
= cn^\alpha (1+b)^\alpha
\geq 
cn^\alpha + bcn^\alpha
=
(1+b)g(n) \,,
\]
meaning $g$ satisfies condition~(iii) of the controlled growth requirement. 
\end{proof}

\begin{proof}[Proof of Theorem~\ref{thm:main} (\ref{case:wait_and_see})]
Let $c > 0$ and $\alpha \in (1, 3/2)$ and let $f(n) = \lceil cn^\alpha \rceil$. Then $f(n) = \omega(n)$ and $f(n) = o(n^{3/2})$ and, by Lemma~\ref{lem:poly_has_controlled_growth}, $f$ has controlled growth. Thus, by Theorem~\ref{thm:bonus}, $P(\cG(f)) = [0,1]$. 
\end{proof}

\section{Proof of Theorem~\ref{thm:main} (\ref{case:cubic})} \label{sec:cubic}

For the duration of Section~\ref{sec:cubic}, fix $c > 0$ and let $f(n) = \lceil cn^{3/2} \rceil$. We begin by showing that $(1+\sqrt{6}c)^{-2}$ is a tight bound for the maximum density in $P(\cG(f))$.
\begin{lemma}\label{lem:3/2main}
Fix $c > 0$ and let $f(n) = \lceil cn^{3/2} \rceil$. Then
\begin{enumerate}[(a)]
\item there exists an activator sequence $\cV$ in $\cG(f)$ such that $\delta(\cG(f),\cV) = (1+\sqrt{6}c)^{-2}$, and 
\item for all activator sequences $\cV$ in $\cG(f)$ we have $\overline{\delta}(\cG(f),\cV) \leq (1+\sqrt{6}c)^{-2}$.
\end{enumerate}
\end{lemma}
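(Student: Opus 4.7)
My plan is to prove parts (a) and (b) separately, and I expect the same critical constant to emerge from matching area and positional constraints.

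For part (b), my starting point is the direct area estimate from Lemma~\ref{lem:burnt bound}, which gives $|B_T| \le (2T^3+T)/3$. Combined with $|V(G_T)| = (2f(T)+1)^2 \sim 4c^2T^3$, this yields only density $\le 1/(6c^2) + o(1)$. Since $(1+\sqrt{6}c)^{-2} < 1/(6c^2)$, this bound alone is insufficient. The extra structure I would exploit is positional: each ball $B_T[t]$ is centered at $v_t \in G_t$, so its farthest $L_\infty$-reach from the origin is at most $f(t) + (T-t)$. For $f(n) = \lceil cn^{3/2}\rceil$, a vertex at $L_\infty$-distance $f(T) - k$ from the origin can only be burnt by an activator $v_t$ whose radius $T-t$ satisfies $T - t \gtrsim (T-t) \cdot (\tfrac{3c}{2}T^{1/2} - 1) \le k$, i.e.\ $T - t \lesssim 2k/(3cT^{1/2})$. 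My plan is to partition $G_T$ into concentric $L_\infty$-shells and bound the burnt vertices in each shell by a union-of-balls argument using only the eligible activators. Integrating this layered bound over all shells and combining with the area estimate should yield the tight bound $(1+\sqrt{6}c)^{-2}$.

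For part (a), my plan is to exhibit an activator sequence burning the sub-square $[-R_T, R_T]^2 \subseteq G_T$ with $R_T = \lfloor f(T)/(1+\sqrt{6}c)\rfloor$, giving density $\to (1+\sqrt{6}c)^{-2}$. The construction adapts the static burning-number strategy for squares from Theorem~\ref{thm:MPR} to the growing grid. A naive "wait and burn" scheme---wait until time $\lceil (R_T/c)^{2/3}\rceil$ for $G_t$ to contain the target, then execute the static strategy requiring $(6R_T^2)^{1/3}$ rounds---only achieves the weaker density $(1+6^{1/3}c^{2/3})^{-3}$. To reach the tight density I would make active use of the otherwise wasted early activators: those from $t \in [1,\lceil (R_T/c)^{2/3}\rceil]$ are constrained near the origin but can be arranged to pre-burn a central sub-square, while the later activators (once $G_t$ is large) fill in the remaining annulus. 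The coefficient $1+\sqrt{6}c$ arises from balancing the central pre-burning phase (which can burn a central square of side roughly $\sqrt{2/3}\cdot t^{3/2}$ at time $t$) against the outward expansion needed for the annulus, so that the total schedule fits exactly in $T$ rounds.

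The main obstacle will be matching the tight constant $1+\sqrt{6}c$ in both directions. For the upper bound, the delicate step is to combine the shell-by-shell positional accounting with the area estimate without losing a constant factor in one or the other: both constraints must bind simultaneously in the asymptotic regime. For the construction, the phases must be scheduled so that they complete in exactly $T$ rounds and the placed balls genuinely tile the target square (not merely have enough total area). Throughout both arguments I would use the manipulation lemmas of Section~\ref{sec:prelim}, particularly Lemmas~\ref{lem:time-shifted}, \ref{lem:turn1}, and \ref{lem:position-shifted}, to delay, trim, and perturb the activator sequences without affecting asymptotic density, which considerably simplifies the bookkeeping needed to verify the tight constant.
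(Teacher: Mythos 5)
Your plan for part (b) is essentially the paper's argument: the ``positional'' constraint (a ball activated at time $t$ reaches $L_\infty$-distance at most $f(t)+(T-t)$ from the origin) plus the raw area count, combined so that both bind at once. One execution caveat: a genuine shell-by-shell sum does not work as stated, because a single ball's area is spread over many shells and you cannot charge its full area to each shell it meets. The clean version is cumulative: for a single cutoff $n$ (equivalently $t=(1+\varepsilon)n$), the vertices burned by activators $v_1,\dots,v_n$ lie in $G_n$ plus a layer of width $t-n$, of area $4c^2n^3+O(n^{5/2})$, while the later activators contribute at most $\tfrac23(\varepsilon n)^3$ in total area; optimizing over $\varepsilon$ gives $\varepsilon=\sqrt6 c$ and the stated bound. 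That is exactly the proof in Section~\ref{sec:upperbound}.

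Part (a) contains a genuine gap: the target configuration you propose is unachievable. If at \emph{every} time $T$ the entire sub-square $[-R_T,R_T]^2$ with $R_T=f(T)/(1+\sqrt6c)$ were burnt, then applying the part-(b) decomposition with cutoff $n=T/(1+\sqrt6c)$ shows the required area $4R_T^2=4c^2n^3(1+\sqrt6c)$ exactly equals the maximum available area $4f(n)^2+\tfrac23(T-n)^3$, with zero slack. Hence the balls activated in $(n,T]$ would have to tile the annulus between radii $f(n)$ and $R_T$ with $o(T^3)$ total waste (pairwise overlap, overlap with the core, and spillover) --- and this at every time $T$. But all these balls grow by one unit of radius per step while their centres are fixed, so a near-perfect tiling at time $T$ forces $\Theta(T)$ adjacent pairs each to overlap in area $\Theta((\eta T)^2)$ by time $(1+\eta)T$, i.e.\ $\Omega(\eta^2T^3)$ waste, a contradiction. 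This is precisely why the ``fully burn the largest possible square'' intuition from the static Theorem~\ref{thm:MPR} (which only cares about one final time) does not transfer to the density setting. The correct extremal picture, and the one the paper builds in Section~\ref{sec:lowerbound}, keeps the \emph{fully} burnt core smaller --- it is $G_n$ with $n=T/(1+\sqrt6c)$, of radius $f(T)/(1+\sqrt6c)^{3/2}$, contributing $4c^2n^3$ --- and obtains the remaining constant fraction $\sqrt6c/(1+\sqrt6c)$ of the burnt area from \emph{disjoint, partially grown} balls sitting outside the core (total area $\tfrac23(\sqrt6cn)^3$). Concretely, the paper works in phases $t_i=i^4$, burns four rectangles framing $G_{t_i}$ using a diamond tiling with radius $r_i\approx\sqrt6c\,t_i$, so each batch of balls needs about $\sqrt6c\,t_i$ rounds to mature; at time $t=(1+\sqrt6c)n$ the matured batches have filled $G_n$ while the immature ones are still disjoint and contribute their full area. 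Your proposal would need to be restructured around this two-term accounting rather than a single fully burnt square.
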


\subsection{Upper bound}\label{sec:upperbound}

We begin with the proof of Lemma~\ref{lem:3/2main} (b) as it is more straightforward. 

\begin{proof}[Proof of \cref{lem:3/2main}~(b)]
Let $\cV = (v_n, n \geq 1)$ be an activator sequence on $\cG(f)$, fix $\varepsilon \in (0,1)$, and let $t = (1+\varepsilon)n$. Define $B_t^{(\text{old})} \subseteq B_t$ to be the set of vertices in $\bigcup_{i=1}^n B_t[i]$ and define $B_t^{(\text{new})} \subseteq B_t$ to be the set of vertices in $\bigcup_{i=n+1}^t B_t[i]$. Note that $B_t = B_t^{(\text{old})} \cup B_t^{(\text{new})}$ and so $|B_t| \leq |B_t^{(\text{old})}| + |B_t^{(\text{new})}|$. 

To bound $|B_t^{(\text{old})}|$, note that the centres of balls $B_t[1],\dots,B_t[n]$ are all in $V(G_n)$, meaning $\bigcup_{i=1}^n B_t[i]$ covers at most every vertex in $V(G_n) \subseteq V(G_t)$ plus every vertex of distance at most $t-n = \varepsilon n$ from $V(G_n)$. Since the length of the perimeter of $G_n$ is $\Theta( n^{3/2})$, we get that
\[
|B_t^{(\text{old})}| \leq |V(G_n)| + O(n^{5/2}) = 4c^2n^3 + O(n^{5/2}) \,.
\]
For $|B_t^{(\text{new})}|$, we have the immediate bound
\[
|B_t^{(\text{new})}| 
\leq 
\sum_{i=n+1}^t |B_t[i]|
=
\frac{2(t-n)^3 + (t-n)}{3}
=
\frac{2(\varepsilon n)^3}{3} + O(n) \,.
\]

Therefore, 
\begin{align*}
|B_t|
&\leq
\left( 4c^2 + \frac{2 \varepsilon^3}{3}\right) \, n^3 + O(n^{5/2}) \,,
\end{align*}
and so 
$$
\frac {|B_t|}{|V(G_t)|} \le f(c,\varepsilon) + O(n^{-1/2}), \qquad \text{with} \ f(c,\varepsilon) = \frac{4c^2 + \frac{2 \varepsilon^3}{3}}{4c^2(1+\varepsilon)^3}. 
$$
Since
$$
f'_\varepsilon(c,\varepsilon) = \frac {-6c^2 + \varepsilon^2}{2c^2(1 + \varepsilon)^4},
$$
letting $\varepsilon = \sqrt{6}c$ to minimize the upper bound for $|B_t| / |V(G_t)|$, we conclude that 
\begin{align*}
\overline{\delta}(\cG(f),\cV) 
\leq
\frac{4c^2 + \frac{2 \varepsilon^3}{3}}{4c^2(1+\varepsilon)^3} 
=
\frac{1 + \frac{2 (\sqrt{6} c)^3}{12 c^2}}{(1+\sqrt{6} c)^3} 
=
\frac{1 + \sqrt{6} c}{(1+\sqrt{6} c)^3} 
=
\frac{1}{(1+\sqrt{6} c)^2}, 
\end{align*}
and the proof is finished.
\end{proof}

\subsection{Lower bound}\label{sec:lowerbound}

In this section we prove Lemma~\ref{lem:3/2main} (a) by describing the construction of an activator sequence $\cV$ in $\cG(f)$ such that $\delta(\cG(f),\cV) = (1+\sqrt{6}c)^{-2}$. As suggested during the proof of Lemma~\ref{lem:3/2main} (b), our strategy ensures the balls surrounding newly activated vertices stay mostly disjoint while the balls surrounding older activated vertices, which must necessarily overlap, have burned almost all vertices that were added up to the point that these older vertices were activated.

At the heart of the strategy is an explicit description of how to burn a rectangle of a specified height and width using a set number of activation vertices. While describing this strategy, we temporarily describe activator vertices using real numbers which may not be integers. We then describe how to recursively apply this rectangle burning strategy to burn $\cG(f)$. Finally, we analyze the burning density by estimating the contributions of the new and old activator vertices. 

Let us justify using real numbers as activator vertices. For an increasing function $f : \N \to \N$, consider the following burning process on the sequence of real compact spaces $(S_n(f), n \geq 1)$, where $S_n(f) = [-f(n), f(n)+1]^2$, and a sequence of activator points $((x_n,y_n), n \geq 1)$ where $(x_n,y_n) \in S_n(f) \cup \{\emptyset\}$. Note that, exclusively in this description, we are writing $[-f(n), f(n)+1]^2$ as the real compact space and not the integer lattice. For each time $t$,
\begin{enumerate}
    \item the space grows from $S_{t-1}(f)$ to $S_t(f)$,
    \item if every point in the unit square $[a, a+1] \times [b, b+1]$ is burning then every point in the four squares $[a-1, a] \times [b, b+1]$, $[a+1, a+2] \times [b, b+1]$, $[a, a+1] \times [b-1, b]$, and $[a, a+1] \times [b+1, b+2]$ changes its state to burning, and finally
    \item every point in the unit square $[x_t, x_{t}+1] \times [y_t, y_{t}+1]$ changes its state to burning.
\end{enumerate}
If a unit square is not fully in $S_t$, we simply do not burn any of the square. The burning density is defined as the limiting fraction of the space that is burning. 

It is clear that, by choosing $((x_n,y_n), n \geq 1)$ as integer pairs, this burning process is identical to the burning process on the integer lattice. Moreover, similar to how we proved Lemma~\ref{lem:time-shifted}, we can see that the burning density on $\cV = ((x_n,y_n), n \geq 1)$ is identical to the burning density on $\cV^* = ((\lfloor x_n \rfloor, \lfloor y_n \rfloor), n \geq 1)$: on the one hand we can delay $\cV$ by one turn and then everything burning due to $\cV$ is also burning due to $\cV^*$, and on the other hand we can delay $\cV^*$ by one turn for the inverse effect. Thus, we are justified in choosing real coordinates instead of integer coordinates in our coming strategy. 

\subsubsection{Burning a rectangle}

Let $R$ be a rectangle with width $w$ and height $\ell$. Such a rectangle can be nearly covered with diamonds of (long) radius $1$ (and so of area 2) centred at the points $(i,j)$ with $0 \leq i < w$, $0 \leq j < \ell$, and $i+j \equiv 1 \mod 2$. See Figure~\ref{fig:tiling} for a depiction of this near covering. Suppose now that we are limited to at most $t$ diamonds and we wish to recreate this tiling pattern in $R$. In this case, we choose the unique radius $r$ such that $t = w\ell/2r^2$ and tile $R$ via diamonds of radius $r$ centred at points $(r \cdot i, r \cdot j)$ with $0 \leq i < w/r$, $0 \leq j < \ell/r$ and $i+j \equiv 1 \mod 2$. Note that the number of diamonds required for this tiling is $\lfloor w/r \rfloor \cdot \lfloor \ell/r \rfloor / 2 \leq w\ell/2r^2 = t$. 
 
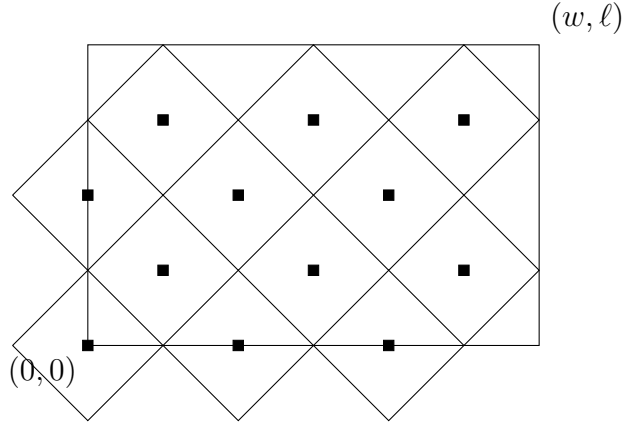
\begin{figure}
\begin{center}

\begin{tikzpicture}
\draw (0,0) rectangle (6,4);
\node[anchor=north east] at (0,0) {$(0,0)$};
\node[anchor=south west] at (6,4) {$(w,\ell)$};

\node[draw, scale=0.5, fill] at (0,0) {}; 
\draw (-1, 0) -- (0,-1) -- (1,0) -- (0,1) -- (-1,0);

\node[draw, scale=0.5, fill] at (2,0) {}; 
\draw (1,0) -- (2,-1) -- (3,0) -- (2,1) -- (1,0);

\node[draw, scale=0.5, fill] at (4,0) {}; 
\draw (3,0) -- (4,-1) -- (5,0) -- (4,1) -- (3,0);

\node[draw, scale=0.5, fill] at (1,1) {}; 
\node[draw, scale=0.5, fill] at (3,1) {}; 
\node[draw, scale=0.5, fill] at (5,1) {}; 
\draw (5,2) -- (6,1) -- (5,0); 

\node[draw, scale=0.5, fill] at (0,2) {}; 
\draw (-1,2) -- (0,1) -- (1,2) -- (0,3) -- (-1,2);

\node[draw, scale=0.5, fill] at (2,2) {}; 
\draw (1,2) -- (2,1) -- (3,2) -- (2,3) -- (1,2);

\node[draw, scale=0.5, fill] at (4,2) {}; 
\draw (3,2) -- (4,1) -- (5,2) -- (4,3) -- (3,2);

\node[draw, scale=0.5, fill] at (1,3) {}; 
\draw (0,3) -- (1,4) -- (2,3);

\node[draw, scale=0.5, fill] at (3,3) {}; 
\draw (2,3) -- (3,4) -- (4,3);

\node[draw, scale=0.5, fill] at (5,3) {}; 
\draw (4,3) -- (5,4) -- (6,3) -- (5,2);
\end{tikzpicture}

\end{center}
\caption{A near-cover of a $w$ by $\ell$ rectangle with diamond tiles centred at points $(i, j)$ for $0 \leq i \leq w$, $0 \leq j \leq \ell$ such that $i+j \equiv 1 \mod 2$. Note that smooth-edged diamonds are drawn for simplicity. Each diamond is in fact a collection of burning squares forming a diamond with stair-case edges.}\label{fig:tiling}
\end{figure}

We now translate this near-covering via diamonds to a strategy for burning a rectangular lattice graph with a limited number of activator vertices. Note that, in the coming lemma, we will not assume that vertices are activated on every turn. The reason for this is that our ultimate strategy requires burning 4 rectangles at the same time, meaning we will need an efficient strategy for burning a rectangle without burning it on every turn. 

\begin{lemma}\label{lem:rectangle strategy}
Let $G = [0, w] \times [0, \ell]$, fix $t > 0$, let $r = \sqrt{w \ell/2t}$, and let $S$ be the set of points $(r \cdot i, r \cdot j)$ such that $0 \leq i \leq w/r$, $0 \leq j \leq \ell/r$, and $i+j \equiv 1 \mod 2$. Now suppose $\cV$ is an activator sequence for $G$ that (a) activates all of $S$ in an arbitrary order, (b) does not activate any vertex outside of $S$, and (c) ends on turn $\tau \geq t$. Then 
\begin{enumerate}[(a)]
\item on turns $n$ with $1 \leq n \leq r$, the burning balls are all disjoint, and 
\item on turns $n$ with $n \geq r+\tau$, there is a $1+O\left( \frac{r(w+\ell)}{w\ell} \right)$ fraction of $V(G)$ that is burning. 
\end{enumerate}
\end{lemma}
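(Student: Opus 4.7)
My plan hinges on the observation that the set $S$ is precisely the set of centres of a diamond tiling of $\mathbb{R}^2$---namely, the tiling depicted in Figure~\ref{fig:tiling} scaled by a factor of $r$. The key combinatorial fact I will extract is that any two distinct points $v_1 = (ri_1, rj_1)$ and $v_2 = (ri_2, rj_2)$ in $S$ satisfy $d_{L_1}(v_1, v_2) \geq 2r$: since $i_1 + j_1$ and $i_2 + j_2$ are both odd, $(i_1-i_2)+(j_1-j_2)$ is even, and so the positive integer $|i_1-i_2|+|j_1-j_2|$ is at least $2$.

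For part (a), I would fix any turn $n$ with $1 \leq n \leq r$. In the continuous burning model introduced earlier in this section, each burned region is essentially an $L_1$-ball of radius $n-k$ around its activator, where $k \geq 1$ is the turn on which that activator was placed. If two such burned regions around distinct $v_1, v_2 \in S$ were to intersect, I would need
\[
d_{L_1}(v_1, v_2) \leq (n-k_1) + (n-k_2) \leq 2n - 2 < 2r \leq d_{L_1}(v_1, v_2),
\]
a contradiction. For part (b), I would fix $n \geq r + \tau$. Every $v \in S$ is then activated (since all activations occur by turn $\tau$) and its burned region has radius at least $n - \tau \geq r$. The $L_1$-ball of radius $r$ around $v = (ri, rj)$ is exactly the tile containing $v$ in the scaled diamond tiling, so the union of these balls covers the plane. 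Restricting to $G = [0,w] \times [0,\ell]$, the only unit squares that can remain unburned are those within $L_1$-distance $O(r)$ of the boundary of $G$, contributing area at most $O(r(w+\ell))$. Dividing by $w\ell$ yields the desired unburned fraction of $O(r(w+\ell)/(w\ell))$.

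The main technical obstacle I anticipate is the bookkeeping required to pass between the clean $L_1$-geometry used above and the unit-square discretisation of the continuous burning model, in which each burned region is a union of translated unit squares rather than a true $L_1$-ball. This introduces $O(1)$ corrections to the effective radii: in part (a) the strict inequality $2n - 2 < 2r$ has enough slack to absorb them (it may be convenient to shift the range of $n$ by a constant or work with $r - O(1)$), while in part (b) the discretisation effect is already subsumed by the $O(r(w+\ell)/(w\ell))$ boundary correction. Neither affects the asymptotic bounds stated in the lemma.
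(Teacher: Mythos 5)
Your proposal is correct and follows essentially the same route as the paper: the $2r$ lower bound on the $L_1$-distance between activators (via the parity of $i+j$) gives disjointness for part (a), and the observation that radius-$r$ balls centred at $S$ realise the scaled diamond tiling gives coverage of all of $G$ except an $O(r(w+\ell))$-size boundary region for part (b). The paper phrases the coverage step as an explicit two-case computation showing every point of $[0,w-r]\times[0,\ell-r]$ lies within distance $r$ of some activator, but this is the same geometry as your tiling argument.
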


\begin{proof}
Starting with (a), the minimum distance between any two vertices in $\cV$ is $2r$. Moreover, at the end of turn $\lfloor r \rfloor$, all of the balls have radius at most $\lfloor r \rfloor - 1$. Thus, the balls are all disjoint. 

Continuing with (b), after turn $\lceil r \rceil + \tau$ all of the balls have radius at least $r$. For any $(x, y) \in [0, w-r] \times [0, \ell-r]$, let $i, j$ be the unique non-negative integers satisfying $r \cdot i \leq x < r \cdot (i+1)$ and $r \cdot j \leq y < r \cdot (j+1)$. 

\bigskip
\noindent
\textbf{Case 1:}
Suppose that $i+j \equiv 0 \mod 2$. Then $(r \cdot i, r \cdot j)$ and $(r \cdot (i+1), r \cdot (j+1))$ are both activator vertices. Moreover, in the $L^1$ norm, 
\[
||(x,y) - (r \cdot i, r \cdot j)||_1 + ||(x,y) - (r \cdot (i+1), r \cdot (j+1))||_1 = 2r, 
\]
meaning $(x, y)$ is at most distance $r$ away from one of the two activator vertices. 

\bigskip
\noindent
\textbf{Case 2:}
Suppose that $i+j \equiv 1 \mod 2$. Then $(r \cdot i, r \cdot (j+1))$ and $(r \cdot (i+1), r \cdot j)$ are both activator vertices. Moreover, in the $L^1$ norm, 
\[
||(x,y) - (r \cdot i, r \cdot (j+1))||_1 + ||(x,y) - (r \cdot (i+1), r \cdot j)||_1 = 2r, 
\]
meaning $(x, y)$ is at most distance $r$ away from one of the two activator vertices. 

Therefore, $[0, w-r] \times [0, \ell-r]$ is covered after time $r + \tau$, meaning at most $r \ell + r w + r^2$ vertices are not covered. Since 
\[
\frac{r \ell + r w + r^2}{w\ell} = O\left(\frac{r(w+\ell)}{w\ell}\right) \,,
\]
this completes the proof.
\end{proof}

\subsubsection{Recursively burning with rectangles}

\begin{figure}[h]
\definecolor{jordan}{RGB}{96,130,255}

\begin{center}\begin{tikzpicture}
\draw (0,0) rectangle (7.5,7.5);
\draw[fill=jordan] (0,1.75) rectangle (1.75,5.75);
\draw[fill=jordan] (1.75,0) rectangle (5.75,1.75);
\draw[fill=jordan] (5.75,1.75) rectangle (7.5,5.75);
\draw[fill=jordan] (1.75,5.75) rectangle (5.75,7.5);
\draw[fill=gray] (1.75,1.75) rectangle (5.75,5.75);

\node at (0.875,3.75) {\large $L(n,k)$};
\node at (6.625,3.75) {\large $R(n,k)$};
\node at (3.75,0.875) {\large $D(n,k)$};
\node at (3.75,6.625) {\large $U(n,k)$};

\draw[dashed,thick] (1.75,7.5) -- (3.25,8.25);
\draw[dashed,thick] (5.75,7.5) -- (4.25,8.25);
\node at (3.75,8.5) {$w(n,k)$};

\draw[dashed,thick] (5.75,5.75) -- (6.1,6.5);
\draw[dashed,thick] (5.75,7.5) -- (6.1,6.75);
\node[anchor=west] at (6,6.625) {$\ell(n,k)$};
\end{tikzpicture}\end{center}

\caption{The gray center square represents $G_n$ and the whole square represents $G_{n+k}$. Each of the four labelled rectangles have two sides of length $w(n,k) = 2c n^{3/2}$ and two sides of length $\ell(n,k) = c \left( (n+k)^{3/2} - n^{3/2} \right)$.\label{fig:rectangles}}
\end{figure}
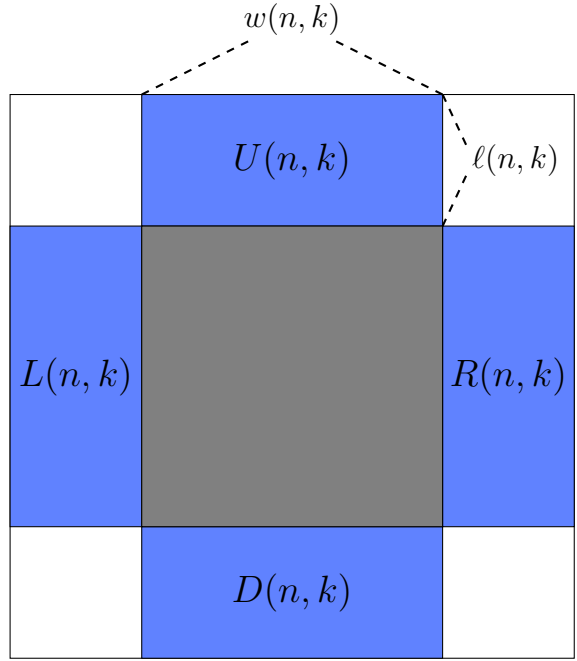

Given $\cG(f) = (G_n,n\geq 1)$, let $U(n,k),L(n,k),R(n,k)$, and $D(n,k)$ be the four rectangles defined in Figure~\ref{fig:rectangles}. Write
\[ \ell(n,k) := f(n+k) - f(n) = c \left( (n+k)^{3/2} - n^{3/2} \right)\] and
\[ w(n,k) := 2f(n) = 2c n^{3/2}\]
for the two unique side lengths of each rectangle; $U(n,k)$ and $D(n,k)$ have height $h(n,k)$ and width $w(n,k)$ whereas $L(n,k)$ and $R(n,k)$ have height $w(n,k)$ and width $\ell(n,k)$. The following is a useful equation coming from the Taylor series expansion of $\ell(n,k)$: 
\begin{equation}\label{eq:puiseux}
\begin{split}
    \ell(n,k) = c n^{3/2} \left( (1+k/n)^{3/2} - 1 \right) &= c n^{3/2} \left( 1 + \frac {3}{2} (k/n) + O \left( (k/n)^2 \right) - 1 \right)\\ 
&= \frac{3c}{2} k\sqrt{n} + O\left( \frac{k^2}{\sqrt{n}} \right).
\end{split}
\end{equation}

We are now ready to finish the proof of \cref{lem:3/2main}.

\begin{proof}[Proof of \cref{lem:3/2main}~(a)]
Let $t_i = i^4$ and write $U_i, L_i, R_i, D_i$ as respective shorthands for $U(t_i, t_{i+1}-t_i-1), L(t_i, t_{i+1}-t_i-1), R(t_i, t_{i+1}-t_i-1), D(t_i, t_{i+1}-t_i-1)$. Likewise, write $w_i$ and $\ell_i$ as respective shorthands for $w(t_i, t_{i+1}-t_i-1)$ and $\ell(t_i, t_{i+1}-t_i-1)$. We will burn $G$ in phases, with phase $i$ lasting from turn $t_{i+1}$ to turn $t_{i+2}-1$ and dedicated to burning the four rectangles $U_i, L_i, R_i, D_i$. Note that these four rectangles exist in $G_t$ for all $t \geq t_{i+1}-1$ and so exist during phase $i$. We dedicate an equal number of rounds (up to an additive 1) to burning each rectangle, meaning the number of activator vertices we have available for each rectangle is 
\begin{align*}
\frac{1}{4} \left( t_{i+2}-1 - t_{i+1} \right)
&=
\frac{1}{4} \left( (i+2)^4 - 1 - (i+1)^4 \right)\\
&=
\frac{1}{4} (4i^3 + 18i^2 + 28i + 14)\\
&\geq
i^3 \,,
\end{align*}
and so we may assume we have exactly $i^3$ vertices per rectangle as we can skip turns. 

Let $r_i = \sqrt{w_i \ell_i/2i^3}$ and let $S$ be the set of points defined in Lemma~\ref{lem:rectangle strategy}. With $\tau_i = 4i^3$, by Lemma~\ref{lem:rectangle strategy} we can burn all four rectangles such that
\begin{itemize}
\item on turns $n$ with $t_{i+1} \leq n \leq t_{i+1} + r_i - 1$ the burning balls formed in this rectangle are disjoint, and 
\item on turns $n$ with $n \geq t_{i+1} + r_i + \tau_i$ there is a $1+O\left( \frac{r(w+\ell)}{w \ell} \right)$ fraction of each rectangle that is burning. 
\end{itemize}
Thus, as $t$ increases as the burning game progresses, each set of four rectangles $U_i, L_i, R_i, D_i$ evolves from a new ball  phase (between turns $t_{i+1}$ and $t_{i+1} + r_i - 1$) to a transitional phase (between turns $t_{i+1} + r_i$ and $t_{i+1} + r_i + \tau_i - 1$) and eventually to an old ball phase (from turns $t_{i+1} + r_i + \tau_i$ onward). We are left to show that this evolution, in asymptotics, achieves the bounds given in the proof of Lemma 4.1 (b).

Note that by (\ref{eq:puiseux}), we have that
\begin{align*}
\ell_i &= \frac {3c}{2} (t_{i+1}-t_i-1)t_i^{1/2} + O \left( \frac{(t_{i+1}-t_i-1)^2}{t_i^{1/2}} \right)\\
&= \frac {3c}{2} \left( (i+1)^4-i^4-1 \right) i^2 + O \left( \frac{(i^3)^2}{i^2} \right)\\
&= \frac {3c}{2} (4 i^3) (1 + O(1/i)) i^2 + O(i^4) \\
&= 6c \, i^5 (1 + O(1/i)),
\end{align*}
and $w_i = 2ct_i^{3/2} = 2c \, i^6$. The rectangles $U_i, L_i, R_i, D_i$ are in the new ball phase up to turn
\begin{align*}
t_{i+1} + r_i - 1 
&\geq 
t_i + \left(\frac{w_i\ell_i}{2i^3}\right)^{1/2} - 1\\
&=
t_i + (1 + O(1/i)) \left(\frac{\left(2c \, i^{6}\right) \left( 6 c \, i^5 \right)}{2i^3}\right)^{1/2} - 1 \\
&=
t_i + (1 + O(1/i)) \left( 6 c^2 \, i^8 \right)^{1/2} - 1 \\
&=
t_i + \sqrt{6}c i^4 + O \left( i^3 \right)\\
&\geq 
\left(1 + \sqrt{6}c\right) t_i + O \left( i^3 \right) \,.
\end{align*}

Likewise, the rectangles are in the old ball phase starting on turn
\begin{align*}
t_{i+1} + r_i + \tau_i
&=
t_{i+1} + \sqrt{6}ci^4 + \tau + O(i^3)\\
&\leq
(1+\sqrt{6}c) t_{i+1} + \tau + O(i^3)\\
&=
(1+\sqrt{6}c) t_{i+1} + 4i^3 + O(i^3)\\
&=
(1+\sqrt{6}c) t_i + O(i_3) \,.
\end{align*}
Note that, although balls in a new ball phase rectangle do not overlap with each other, it is possible that overlap occurs between adjacent rectangles. On the other hand, we could modify the tiling in Lemma~\ref{lem:rectangle strategy} by not burning $(r \cdot i, r \cdot j)$ if, say, $\min\{i,j\} < 100$, and the asymptotic result would still hold. Thus, we may ignore the negligible loss coming from the overlap between adjacent rectangles. 

Fix turn $t$ and let $n$ be such that $t = n(1+\sqrt{6}c)$. Next, let $\delta$ be large enough so that $t_i < n-\delta$ implies $U_i, L_i, R_i, D_i$ are old ball phase rectangles and $t_i > n+\delta$ implies they are new ball phase rectangles. By the previous two computations, we can find such a $\delta$ with $\delta = O(t^{3/4})$. The total contribution from old ball phase rectangles is at least
\begin{align*}
4 \sum_{i=1}^{\lfloor(n-\delta)^{1/4}\rfloor} \left(w_i \ell_i + O\left( i^3 \right)\right)
&=
O(n) + 4 \sum_{i=1}^{\lfloor(n-\delta)^{1/4}\rfloor}\left(2ci^6\right) \left(6ci^5\right) (1+O(1/i))\\
&=
O(n) + \bigl(1+O(n^{-1/4})\bigr) \, 4 \sum_{i=1}^{\lfloor(n-\delta)^{1/4}\rfloor}\left(2ci^6\right) \left(6ci^5\right) \\
&=
O(n) + \bigl(1+O(n^{-1/4})\bigr) \, 48c^2 \sum_{i=1}^{\lfloor(n-\delta)^{1/4}\rfloor} i^{11}\\
&=
O(n) + \bigl(1+O(n^{-1/4})\bigr) \, 48c^2 \int_0^{n^{1/4}} x^{11} \, dx\\
&=
O(n) + \bigl(1+O(n^{-1/4})\bigr) \, 4c^2 n^3 \, dx\\
&= 4c^2 n^3 + O(n^{3-1/4})\\
&= 4c^2 n^3 (1+o(1))
\end{align*}
and the total contribution from the new ball phase rectangles is at least
\begin{align*}
\frac{2(t-n-\delta)^3 + t}{3}
&=
\frac{2(\sqrt{6}cn (1+O(n^{-1/4})))^3 + O(n)}{3}\\
&=
\frac{2}{3} (\sqrt{6}cn)^3 + O(n^{3-1/4})\\
&=
\frac{2}{3} (\sqrt{6}cn)^3 (1+o(1)) \,.
\end{align*}
Therefore, the burning density after turn $t$ is at least 
\begin{align*}
\frac{4c^2n^3 + \frac{2}{3}(\sqrt{6}cn)^3}{4f(t)^2} + o(1)
&=
\frac{c^2n^3 + \frac{1}{6}(\sqrt{6}cn)^3}{f(t)^2} + o(1)\\
&=
\frac{c^2n^3(1+\sqrt{6}c)}{c^2n^3(1+\sqrt{6}c)^3} + o(1)\\
&=
\frac{1}{(1+\sqrt{6}c)^2} + o(1) \,,
\end{align*}
and this concludes the proof. 
\end{proof}

\subsection{Smaller densities and the proof of Theorem~\ref{thm:asymptotic}} \label{sec:inbetween}

\begin{proof}[Proof of Theorem~\ref{thm:main} (\ref{case:cubic})]
Let $c > 0$ and let $f = \lceil cn^{3/2} \rceil$. Then $f(n) = \omega(n^2)$, meaning the activator sequence $\cV = (v_n,n \geq 1)$ with $v_n = (0,0)$ for all $n$ achieves density $0$.  

Next, by Lemma~\ref{lem:3/2main}, $(1+\sqrt{6}c)^{-2} \in P(\cG(f))$. Now for $\rho \in (0,(1+\sqrt{6}c)^{-2})$, let $d = \sqrt{\rho (1+\sqrt{6}c)^{2}}$ and let $g(n) = df(n)$. Then again by Lemma~\ref{lem:3/2main} we have $(1+\sqrt{6}c)^{-2} \in P(\cG(g))$, implying by Lemma~\ref{lem:f,g frac density} that $d^2 (1+\sqrt{6}c)^{-2} = \rho \in P(\cG(f))$. 
\end{proof}

Note that this proof finalizes the proof of Theorem~\ref{thm:main}.

\begin{proof}[Proof of Theorem~\ref{thm:asymptotic}]
As discussed previously, Bonato, Gunderson and Shaw proved the strongest possible extension of case $(d)$. 

Let $f : \N \to \N$ be strictly increasing such that $\lim_{n \to \infty} f(n)/n = c$ for some $c \geq 1$. Then for $g(n) = \lceil cn \rceil$ we have $\lim_{n \to \infty} f(n)/g(n) = 1$. By Theorem~\ref{thm:main} (\ref{case:linear}), we have $P(\cG(g)) = [1/(2c^2),1]$. Thus, by Lemma~\ref{lem:f,g same densities}, we also have $P(\cG(f)) = [1/(2c^2),1]$. 

Finally, let $f : \N \to \N$ be strictly increasing such that $\lim_{n \to \infty} f(n)/n^{3/2} \to c$ for some $c > 0$. Then, for $g(n) = \lceil cn^{3/2} \rceil$ we have $\lim_{n \to \infty} f(n)/g(n) = 1$. By Theorem~\ref{thm:main} (\ref{case:cubic}), we have $P(\cG(g)) = [0,(1+\sqrt{6}c)^{-2}]$. Thus, by Lemma~\ref{lem:f,g same densities}, we also have $P(\cG(f)) = [0,(1+\sqrt{6}c)^{-2}]$. 
\end{proof}

\section{Further Directions}\label{sec:conclusion}

A natural extension of this work is to study the attainable densities on growing $d$-dimensional grids. To this end, we pose the following conjecture.

\begin{conjecture}
Let $f : \N \to \N$ be a strictly increasing function, let $G_n = [-f(n),f(n)]^d$ for some $d>2$, and let $\cG(f) = (G_n,n \geq 1)$. 
\begin{enumerate}
\item[(a)] If $f(n) = \lceil cn \rceil$ for some $c \geq 1$ then $P(\cG(f)) = [1/(2c^d),1]$.
\item[(b)] If $f(n) = \omega(n)$ and $f(n) = o(n^{(d+1)/d})$ then $P(\cG(f)) = [0,1]$.
\item[(c)] If $f(n) = \lceil cn^{(d+1)/d} \rceil$ for some $c > 0$ then $P(\cG(f)) = [0,\phi(c,d)]$ for some constant $\phi(c,d)$ depending on $c$ and $d$.
\item[(d)] If $f(n) = \omega(n^{(d+1)/d})$ then $P(\cG(f)) = \{0\}$.  
\end{enumerate}
\end{conjecture}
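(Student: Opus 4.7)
The plan is to mirror the one- and two-dimensional arguments case by case, replacing each geometric object by its $d$-dimensional analog. The preparatory lemmas in Section~\ref{sec:prelim} do not use anything specific to $d=2$, so they generalise essentially verbatim: Lemma~\ref{lem:f,g frac density} becomes $\rho \in P(\cG(f)) \Rightarrow c^d \rho \in P(\cG(g))$, and Lemma~\ref{lem:burnt bound}~(b) becomes $|B_t| = O(t^{d+1})$ since an $L^1$-ball of radius $r$ in $\Z^d$ has $\frac{2^d}{d!} r^d + O(r^{d-1})$ points. Case~(d) is then immediate. For case~(a), a single repeated activator achieves density $\frac{1}{d!\,c^d}$, which is the infimum via the trim lemma; intermediate densities follow from the $d$-dimensional fractional-density lemma applied to sub-grids $[-c_0 n, c_0 n]^d$ for $c_0 \in [1,c]$. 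I would flag that the stated constant $1/(2c^d)$ in the conjecture appears to need correcting to $1/(d!\,c^d)$, since the two agree only at $d=2$.

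Case~(b) is the most direct. The $d$-dimensional generalisation of the Mitsche--Pra\l{}at--Roshanbin bound cited as~\cite{BC25} gives $b(G_t) = \Theta\bigl(f(t)^{d/(d+1)}\bigr) = o(t)$ whenever $f(n) = o(n^{(d+1)/d})$, so the stop-and-refill strategy in Lemma~\ref{lem:(d) upper} generalises word-for-word to yield density $1$, and the fractional-density lemma then interpolates down to $0$. Lemma~\ref{lem:poly_has_controlled_growth} also extends unchanged to $\lceil c n^\alpha \rceil$ for $\alpha \in (1,(d+1)/d)$.

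Case~(c) is the main obstacle. I would extend the shell construction of Section~\ref{sec:lowerbound}: decompose $G_{n+k} \setminus G_n$ into its $2d$ face slabs, each a thickened $(d-1)$-dimensional face of $G_n$ of depth $\ell(n,k) := f(n+k) - f(n) \sim c\,\tfrac{d+1}{d}\,k\,n^{1/d}$ (generalising~(\ref{eq:puiseux})). Set $t_i := i^{d+2}$ so that phase $i$ has length $\Theta(i^{d+1})$ and each of the $2d$ faces receives $\Theta(i^{d+1})$ activators, arranged on a lattice that packs $L^1$-balls of an appropriately chosen radius $r_i$. The matching upper bound (analog of Lemma~\ref{lem:3/2main}~(b)) is cleanest: split $B_t$ into balls centred in $G_n$ and those centred in $G_t \setminus G_n$, contributing at most $|V(G_n)| + O(n^{d+1-1/d})$ and $\tfrac{2^d}{d!(d+1)}(t-n)^{d+1}$ respectively. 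Writing $t = (1+\varepsilon)n$ and minimising
\[
F(\varepsilon) \;:=\; \frac{c^d + \dfrac{\varepsilon^{d+1}}{d!(d+1)}}{c^d(1+\varepsilon)^{d+1}}
\]
over $\varepsilon > 0$, one computes $F'(\varepsilon) \propto \varepsilon^d - (d+1)!\,c^d$, so the optimum sits at $\varepsilon^* = \bigl((d+1)!\bigr)^{1/d} c$, yielding the prediction $\phi(c,d) = \bigl(1 + ((d+1)!)^{1/d}\,c\bigr)^{-d}$. At $d=2$ this recovers $(1+\sqrt{6}\,c)^{-2}$, giving some confidence in the form.

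The hard part will be achieving the matching lower bound in case~(c). The 2D construction exploits that $L^1$-balls tile $\R^2$ with centres on the even sublattice of $\Z^2$, so a packing of radius-$r$ balls is simultaneously a perfect covering. In dimension $d \geq 3$ the $L^1$-ball is a cross-polytope, which packs but does \emph{not} tile $\R^d$: the best lattice packing density $\eta_d$ of cross-polytopes is strictly less than $1$, so a direct lift of the 2D construction would cover only a fraction $\eta_d$ of each old shell, producing a lower bound of $\eta_d \cdot \phi(c,d)$ rather than $\phi(c,d)$. Closing this gap will require either a finer strategy (e.g.\ non-lattice placements that asymptotically realise a covering of each slab, or adaptive re-seeding in overlap regions) or a refined upper bound that drops by the packing-density factor. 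This tiling/packing question for cross-polytopes in $\Z^d$ is the principal obstruction; the case $d=3$ looks tractable by direct computation, while a general $d$-dimensional answer may force a correction to the clean form of $\phi(c,d)$ suggested above.
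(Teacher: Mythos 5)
First, note that this statement is the paper's concluding Conjecture in Section~\ref{sec:conclusion}; the authors offer no proof of it, so there is no argument of theirs to compare yours against --- you are attacking what the paper explicitly leaves open. Judged on its own terms, your plan for (a), (b), and (d) is essentially sound: the preliminary lemmas and the stop-and-refill argument of Lemma~\ref{lem:(d) upper} are dimension-free once Theorem~\ref{thm:MPR} is replaced by its $d$-dimensional analogue from \cite{BC25}, and your observation that a single activator yields limiting density $\tfrac{1}{d!\,c^d}$ (so that the conjectured endpoint $1/(2c^d)$ in (a) agrees with the truth only at $d=2$) is correct and worth recording. One caution on (a): the sub-grid trick with $c_0\in[1,c]$ only interpolates down to $1/c^d$, so you still need a separate mixing argument (single ball plus partial full-burn, as in \cite{BGS20}) to realize densities in $[1/(d!\,c^d),\,1/c^d)$. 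Your case (c) upper bound is a clean generalization of Section~\ref{sec:upperbound}, and the optimization $\varepsilon^*=((d+1)!)^{1/d}c$, $\phi(c,d)=\bigl(1+((d+1)!)^{1/d}c\bigr)^{-d}$ checks out against the $d=2$ value $(1+\sqrt{6}c)^{-2}$.

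The genuine gap is exactly where you place it: the lower bound in case (c). Lemma~\ref{lem:rectangle strategy} is load-bearing precisely because the same lattice of centres makes the radius-$r$ diamonds simultaneously a packing (giving the disjoint new-ball count) and, at the same radius, a covering (giving fully burnt old slabs after a transitional window of length $\tau_i + O(i^3) = o(t_i)$). For $d\geq 3$ the $L^1$ ball does not tile $\Z^d$ (the Golomb--Welch regime), so any lattice of centres forces a covering radius $\theta r$ with $\theta>1$ while disjointness fails beyond radius $r$; since $r_i=\Theta(t_i)$, the transitional window now has length $\Theta(t_i)$ rather than $o(t_i)$, and the ``old slabs full, new balls disjoint'' accounting no longer matches the upper bound. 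The loss is not obviously the packing density $\eta_d$ either, since the overlapping transitional balls contribute a nontrivial amount that your accounting discards. Note also that the conjecture as stated requires $P(\cG(f))$ to be a closed interval $[0,\phi(c,d)]$, so even the weak form of (c) demands that $\sup P(\cG(f))$ be attained; a sandwich $[0,\eta_d\phi_0]\subseteq P(\cG(f))\subseteq[0,\phi_0]$ does not establish this. So your proposal resolves (b) and (d), essentially resolves (a) after correcting the constant and patching the interpolation, and leaves (c) open --- as does the paper.
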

\noindent
In a recent work by Blanc and Contat~\cite{BC25}, they study random burning on the $d$-dimensional Euclidean lattice, and their results could give insights as to the correct function $\phi(c, d)$ in case $(c)$. Moreover, their generalization of Theorem~\ref{thm:MPR} from Mitsche, Pra\l{}at and Roshanbin~\cite{MPR17} might allow for a generalization of our proof of Theorem~\ref{thm:main} (b). 

Another direction for future work is to find necessary and sufficient growth conditions for the function $f(n)$ so that $P(\cG(f)) \neq \emptyset$, or so that $\rho \in P(\cG(f))$ for some $\rho \in (0, 1)$. We showed via example that at least \textit{some} restriction on the growth of $f$ is necessary to obtain a burning density on $\cG(f)$, though we do not claim that controlled growth is the weakest possible restriction.

\end{document}